\theoremstyle{plain}
\newtheorem{theorem}{Theorem}
\newtheorem{proposition}[theorem]{Proposition}
\newtheorem{corollary}[theorem]{Corollary}
\theoremstyle{definition}
\newtheorem{definition}[theorem]{Definition}
\newtheorem{remark}[theorem]{Remark}
\newtheorem{example}[theorem]{Example}
\newtheorem{question}[theorem]{Question}
\begin{document}
\title{Upper frequent hypercyclicity and related notions}
\author{Antonio Bonilla\thanks{The first author is supported by MEC and FEDER, Project MTM2013-47093-P}
\and
Karl-G. Grosse-Erdmann}
\date{}

\maketitle

\begin{abstract}
Enhancing a recent result of Bayart and Ruzsa we obtain a Birkhoff-type characterization of upper frequently hypercyclic operators and a corresponding Upper Frequent Hypercyclicity Criterion. As an application we characterize 
upper frequently hypercyclic weighted backward shifts on sequence spaces, which in turn allows us to come up with various counter-examples in linear dynamics that are substantially simpler than those previously obtained in the literature. More generally, we introduce the notion of upper Furstenberg families $\mathcal{A}$ and show that our main results hold for $\mathcal{A}$-hypercyclic operators with respect to such families.
\end{abstract}

\section{Introduction}

Baire category arguments play a crucial role in topological dynamics, and in particular in linear dynamics. One of their main applications, the powerful Birkhoff transitivity theorem, lies behind several important results. On the other hand, a recent and by now central notion in linear dynamics, that of frequently hypercyclic operators as introduced by Bayart and Grivaux \cite{BaGr04}, \cite{BaGr06}, has to be studied without the use of Baire category. 
Indeed, since the set of frequently hypercyclic vectors is always meagre there cannot be a Birkhoff-type theorem for frequent hypercyclicity.

Now, in \cite{Shk09}, Shkarin has introduced the weaker notion of upper (or $\mathcal{U}$-)frequently hypercyclic operators. In a recent paper, Bayart and Ruzsa \cite{BaRu15} have shown that the set of upper frequently hypercyclic vectors is residual whenever it is non-empty. It is therefore natural to seek a Birkhoff-type enhancement of the Bayart-Ruzsa result, which is the starting point of our present investigation.

The main aim of this paper is to show that there is indeed a Birkhoff transitivity theorem for upper frequent hypercyclicity, and to deduce from it an Upper Frequent Hypercyclicity Criterion. As an application we obtain a characterization of upper frequently hypercyclic weighted shifts on $c_0$ (and on quite general sequence spaces) that simplifies considerably the one given by Bayart and Ruzsa \cite{BaRu15}. This will then allow us to come up with some counter-examples that are substantially simpler than those previously obtained in the literature.

One may wonder why ordinary hypercyclicity and upper frequent hypercyclicity behave so differently from frequent hypercyclicity. We therefore pursue our investigation within the general framework of $\mathcal{A}$-hypercyclicity, where $\mathcal{A}$ is a Furstenberg family. This also allows us to cover the notion of reiterative hypercyclicity that was introduced by Peris \cite{Per05} and that has recently been studied by B\`es, Menet, Peris and Puig \cite{BMPP15}, \cite{Men15}.

The paper is organized as follows. In Section \ref{s-fur} we consider Furstenberg families and the corresponding $\mathcal{A}$-hypercyclicity. We introduce, in particular, the notion of an upper Furstenberg family and consider various examples. Section \ref{s-Bir} presents a Birkhoff transitivity theorem for upper Furstenberg families. As an application we show that the inverse of an invertible reiteratively hypercyclic operator is itself reiteratively hypercyclic. Another consequence, an $\mathcal{A}$-Hypercyclicity Criterion for upper Furstenberg families, is deduced in Section \ref{s-AHC}. With its help we can characterize $\mathcal{A}$-hypercyclic weighted backward shifts for upper Furstenberg families, see Section \ref{s-weiupp}. Motivated by this result we will then characterize, in Section \ref{s-weiarb}, $\mathcal{A}$-hypercyclic weighted backward shifts for arbitrary Furstenberg families; the proof is necessarily constructive. In the final Section \ref{s-ex} we then construct four counter-examples, three of which represent significant simplifications of previous constructions in the literature.

We end this introduction by recalling the three central notions of this paper; see Examples \ref{ex-fur1} and \ref{ex-fur} for the densities involved. 

\begin{definition}\label{d-uppreit}
Let $T:X\to X$ be a continuous mapping on a topological space $X$. 

(a) $T$ is called \textit{frequently hypercyclic} if there exists a point $x\in X$ such that, for any non-empty open set $U$ in $X$,
\[
\underline{\text{dens}}\; \{n\geq 0 : T^nx\in U\}>0.
\]
The point $x$ is then called \textit{frequently hypercyclic} for $T$, and the set of such points is denoted by $FHC(T)$.

(b) $T$ is called \textit{upper frequently hypercyclic} if there exists a point $x\in X$ such that, for any non-empty open set $U$ in $X$,
\[
\overline{\text{dens}}\; \{n\geq 0 : T^nx\in U\}>0.
\]
The point $x$ is then called \textit{upper frequently hypercyclic} for $T$, and the set of such points is denoted by $UFHC(T)$.

(c) $T$ is called \textit{reiteratively hypercyclic} if there exists a point $x\in X$ such that, for any non-empty open set $U$ in $X$,
\[
\overline{\text{Bd}}\; \{n\geq 0 : T^nx\in U\}>0.
\]
The point $x$ is then called \textit{reiteratively hypercyclic} for $T$, and the set of such points is denoted by $RHC(T)$.
\end{definition} 

Let us also recall that a mapping $T:X\to X$ is called \textit{hypercyclic} if some point $x\in X$ (also called \textit{hypercyclic}) has a dense orbit $\{T^nx : n\geq 0\}$; the set of such points is denoted by $HC(T)$. If $X$ is a complete metric space then $T$ is called \textit{chaotic} if it is hypercyclic and if it admits a dense set of periodic points. Note that, in this paper, we take the liberty of using the term `hypercyclic' even in a non-linear setting. Moreover, we will admit arbitrary sequences $(T_n)$ instead of only the iterates $(T^n)$ of a single mapping $T$.

For an introduction to linear dynamics we refer to the recent monographs \cite{BaMa09} and \cite{GrPe11}.

\section{Furstenberg families and $\mathcal{A}$-hypercyclicity}\label{s-fur}

We adopt the following terminology.

\begin{definition} 
A non-empty family $\mathcal{A}$ of subsets of $\mathbb{N}_0$ is called a \textit{Furstenberg family} if it is \textit{hereditary upward}, that is, if
\[
A\in\mathcal{A}, B\supset A \Longrightarrow B\in \mathcal{A}.
\]
\end{definition}

\begin{definition}
Let $\mathcal{A}$ be a Furstenberg family, and let $T_n:X\to Y$, $n\geq 0$, and $T:X\to X$ be continuous mappings, where $X$ and $Y$ are topological spaces.

(a) $(T_n)$ is called \textit{$\mathcal{A}$-universal} if there exists a point $x\in X$ such that, for any non-empty open set $U$ in $Y$,
\[
\{ n\geq 0 : T_nx\in U\}\in \mathcal{A}.
\]
The point $x$ is then called \textit{$\mathcal{A}$-universal} for $(T_n)_n$.

(b) $T$ is called \textit{$\mathcal{A}$-hypercyclic} if there exists a point $x\in X$ such that, for any non-empty open set $U$ in $X$,
\[
\{ n\geq 0 : T^nx\in U\}\in \mathcal{A}.
\]
The point $x$ is then called \textit{$\mathcal{A}$-hypercyclic} for $T$, and the set of such points is denoted by
\[
\mathcal{A}HC(T).
\]
\end{definition} 

\begin{remark} A word needs to be said about our terminology. The idea of considering families of sets of integers to quantify dynamical properties can be traced back at least to Gottschalk and Hedlund \cite[Section 3]{GoHe55}, where they appear as collections of `admissible sets' in connection with recurrence phenomena. They then played a prominent role in Furstenberg \cite{Fur81}; for example, $\mathcal{A}$-recurrence is considered in Chapter 9. This motivated Akin \cite{Aki97} to introduce the notion of `Furstenberg families'; he studies notions like $\mathcal{A}$-recurrence and $\mathcal{A}$-transitivity in great detail. Furstenberg families, sometimes also simply called families, have recently been studied intensively in non-linear dynamics, see for example Glasner \cite{Gla04}, Xiong, L\"u and Tan \cite{XLT07}, Li \cite{Li11}, Huang, Li and Ye \cite{HLY12}, and Chen, Li and L\"u \cite{CLL15}. In linear dynamics, the notion of $\mathcal{A}$-hypercyclicity is due to Shkarin \cite[Section 5]{Shk09}; a detailed study was first undertaken by B\`es, Menet, Peris and Puig \cite{BMPP15}. 

We adopt here the usual terminology from linear dynamics to speak of universality in the case of general sequences $(T_n)$ and of hypercyclicity in the case of iterates of a single mapping, see \cite{BaMa09} and \cite{GrPe11}. In contrast, in Li \cite{Li11}, $\mathcal{A}$-hypercyclic points appear as $\mathcal{A}$-transitive, $\mathcal{A}$-hypercyclic mappings as $\mathcal{A}$-point transitive.  
\end{remark} 

The three most important notions in linear dynamics are those of hypercyclicity, frequent hypercyclicity and chaos. Two of those are particular instances of $\mathcal{A}$-hypercyclicities. Recently, Grivaux and Matheron \cite{GrMa14} introduced a new type of $\mathcal{A}$-hypercyclicity that is even stronger than frequent hypercyclicity.

\begin{example}\label{ex-fur1}
(a) Let $\mathcal{A}_{\neq\varnothing}$ be the Furstenberg family of non-empty subsets of $\mathbb{N}_0$. Then $\mathcal{A}_{\neq\varnothing}$-hypercyclicity is ordinary hypercyclicity.

(b) Let $\mathcal{A}_{\text{ld}}$ be the Furstenberg family of sets $A$ of positive lower density, that is, with
\[
\underline{\text{dens}}\; A = \liminf_{N\to\infty} \frac{1}{N+1}\text{card} (A \cap [0,N])>0.
\]
Then $\mathcal{A}_{\text{ld}}$-hypercyclicity is frequent hypercyclicity. 

(c) Hindman \cite{Hin90} has studied the Furstenberg family $\mathcal{A}_{\text{Hin}}$ of sets $A$ for which
\[
\lim_{N\to\infty} \underline{\text{dens}}\; \bigcup_{n=0}^N (A-n) = 1,
\]
where $A-n=\{k-n : k\in A, k\geq n\}$. He showed that any set for which the lower density is positive and coincides with its upper Banach density (see Example \ref{ex-fur}(b) below) has this property. For want of a better word we will call $\mathcal{A}_{\text{Hin}}$-hypercyclic operators \textit{very frequently hypercyclic}. Thus $x\in X$ is very frequently hypercyclic for $T:X\to X$ if and only if, for any non-empty open set $U$ in $X$,
\[
\underline{\text{dens}}\; \Big\{ k\geq 0 : T^k x \in \bigcup_{n=0}^N T^{-n}(U)\Big\} \to 1\quad\text{as $N\to\infty$}.
\]
Clearly, very frequent hypercyclicity implies frequent hypercyclicity. In the proof of \cite[Proposition 2.9]{GrMa14}, Grivaux and Matheron show that any continuous mapping on a second countable topological space that admits an ergodic Borel measure of full support is very frequently hypercyclic; in particular, any operator on a separable Fr\'echet space that satisfies the Frequent Hypercyclicity Criterion is very frequently hypercyclic, see Murillo-Arcila and Peris \cite{MuPe13}. This provides a rich source of such operators, see \cite{BaMa09}, \cite{GrPe11}. 
\end{example}

In this context, the following problem seems to be open.

\begin{question}\label{q-chaos}
Is there a Furstenberg family $\mathcal{A}$ for which $\mathcal{A}$-hypercyclicity is equivalent to chaos (for operators on Banach spaces, say)? It would already be of interest to have a Furstenberg family $\mathcal{A}$ for which $\mathcal{A}$-hypercyclicity implies chaos.
\end{question}

In Akin \cite{Aki97} one finds an in-depth study of Furstenberg families, along with useful properties that may be imposed on them. The following is a concept that excludes the trivial family of all subsets of $\mathbb{N}_0$.

\begin{definition} 
A Furstenberg family is called \textit{proper} if it does not contain the empty set (equivalently, if it does not coincide with $\mathcal{P}(\mathbb{N}_0)$). 
\end{definition} 

Then we have the following.

\begin{proposition}\label{p-afhc}
Let $\mathcal{A}$ be a proper Furstenberg family, and let $T_n:X\to Y$, $n\geq 0$, be continuous mappings between topological spaces.

\emph{(a)} If $x\in X$ is an $\mathcal{A}$-universal point for $(T_n)$ then $\{T_nx:n\geq 0\}$ is dense in $Y$.

\emph{(b)} If $X=Y$ and the mappings $T_n$, $n\geq 0$, commute and have dense range, then the set of $\mathcal{A}$-universal points for $(T_n)$ is either empty or dense.
\end{proposition}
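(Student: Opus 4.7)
\medskip
\noindent\textbf{Proof plan.}
For part (a), the plan is to argue directly from the definition of $\mathcal{A}$-universality together with properness. Given any non-empty open set $U\subset Y$, the set $N_x(U)=\{n\geq 0:T_nx\in U\}$ belongs to $\mathcal{A}$ by hypothesis. Since $\mathcal{A}$ is proper, $\varnothing\notin\mathcal{A}$, hence $N_x(U)$ cannot be empty. So there exists at least one $n$ with $T_nx\in U$, i.e.\ the orbit $\{T_nx:n\geq 0\}$ meets every non-empty open subset of $Y$, which is exactly density. This is essentially a one-line observation; no real obstacle here.

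For part (b), I would reduce to (a) by showing that, whenever $x$ is $\mathcal{A}$-universal, every point in the orbit $\{T_mx:m\geq 0\}$ is also $\mathcal{A}$-universal; part (a) then supplies density of this orbit and hence density of the set of $\mathcal{A}$-universal points. Fix $m\geq 0$ and a non-empty open set $U\subset X$. The natural move is to pull $U$ back by $T_m$: continuity of $T_m$ gives that $T_m^{-1}(U)$ is open, and the dense range hypothesis on $T_m$ guarantees that $T_m^{-1}(U)$ is non-empty (pick any point of $U$ that lies in the closure of $T_m(X)$, and use that $T_m(X)\cap U\neq\varnothing$). Hence, since $x$ is $\mathcal{A}$-universal,
\[
\{n\geq 0:T_nx\in T_m^{-1}(U)\}\in\mathcal{A}.
\]
Now the commutativity of the family $(T_n)$ converts this into a statement about the orbit of $T_mx$: $T_nx\in T_m^{-1}(U)$ exactly when $T_mT_nx\in U$, which (by commutativity) is the same as $T_n(T_mx)\in U$. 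Thus $\{n\geq 0:T_n(T_mx)\in U\}$ equals the set above and lies in $\mathcal{A}$. So $T_mx$ is $\mathcal{A}$-universal for each $m$, and by part (a) the orbit of $x$ is dense, hence so is the (larger) set of $\mathcal{A}$-universal points.

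The only subtle point, and where I would be careful, is the justification that $T_m^{-1}(U)$ is non-empty: this is precisely where the dense range hypothesis is used, and without it the proof collapses. Everything else is bookkeeping with the definition and the hereditary-upward property of $\mathcal{A}$ (which is not even needed here, since we get an exact set-equality rather than a superset relation).
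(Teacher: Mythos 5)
Your proof is correct and follows essentially the same route as the paper: part (a) is the immediate observation that properness forces each return set to be non-empty, and part (b) shows that each $T_mx$ is again $\mathcal{A}$-universal by pulling open sets back through $T_m$ (continuity for openness, dense range for non-emptiness, commutativity to rewrite the return set). Your identification of the dense-range hypothesis as the one essential ingredient matches the paper's argument exactly.
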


\begin{proof} Assertion (a) is obvious. For (b), let $x\in X$ be an $\mathcal{A}$-universal point. In view of (a) it suffices to show that, for any $m\geq 0$, $T_mx$ is also $\mathcal{A}$-universal. Thus fix $m\geq 0$ and a non-empty open set $U\subset X$. Then $T_m^{-1}(U)$ is a non-empty open set, and hence
\[
\{n\geq 0 : T_n(T_mx)\in U\} = \{n\geq 0 : T_nx\in T_m^{-1}(U)\} \in \mathcal{A}.
\]
\end{proof}

Our main objective will be a generalization of the Birkhoff transitivity theorem to $\mathcal{A}$-hypercyclicity. In particular we want to deduce that the set $\mathcal{A}HC(T)$ is residual. However, for frequent hypercyclicity, the set of frequently hypercyclic vectors is always meagre, as was recently shown by Moothathu \cite{Moo13}, Bayart and Ruzsa \cite{BaRu15}, and Grivaux and Matheron \cite{GrMa14}. Thus we need to impose some restrictions on $\mathcal{A}$.

\begin{definition} 
(a) A Furstenberg family $\mathcal{A}$ is called \textit{upper} if it is proper and it can be written as
\[
\mathcal{A} = \bigcup_{\delta\in D} \mathcal{A}_{\delta}\quad \mbox{with}\quad \mathcal{A}_{\delta}:=\bigcap_{\mu\in M} \mathcal{A}_{\delta,\mu}
\]
for some families $\mathcal{A}_{\delta,\mu}$ ($\delta\in D,\mu\in M$), where $D$ is arbitrary but $M$ is countable, and such that
\begin{itemize}
\item[(i)] each family $\mathcal{A}_{\delta,\mu}$ is \textit{finitely hereditary upward}, that is, for any $A\in \mathcal{A}_{\delta,\mu}$ there is a finite set $F\subset\mathbb{N}_0$ such that
\[
B\supset A\cap F  \Longrightarrow B\in\mathcal{A}_{\delta,\mu};
\]
\item[(ii)] $\mathcal{A}$ is \textit{uniformly left-invariant}, that is, for any $A\in \mathcal{A}$ there is some $\delta\in D$ such that, for all $n\geq 0$,
\[
A-n\in \mathcal{A}_\delta.
\]
\end{itemize}
\end{definition}

For some proofs we will need an additional property.

\begin{definition} 
(a) A Furstenberg family $\mathcal{A}$ is called \textit{finitely invariant} (\textit{f.i.}) if, for any $A\in \mathcal{A}$ and all $n\geq 0$,
\[
A\setminus [0,n] \in \mathcal{A}.
\]

(b) Let $\mathcal{A} = \bigcup_{\delta\in D}\mathcal{A}_{\delta}$ with $\mathcal{A}_{\delta}=\bigcap_{\mu\in M} \mathcal{A}_{\delta,\mu}$ be an upper Furstenberg family. Then it is called \textit{uniformly finitely invariant} (\textit{u.f.i.}) if, for any $A\in \mathcal{A}$, there is some $\delta\in D$ such that, for all $n\geq 0$,
\[
A\setminus [0,n] \in \mathcal{A}_\delta.
\]
\end{definition}

\begin{remark}  
(a) Note that, for uniform left-invariance and uniform finite invariance, we demand not only that $A-n$ and $A\setminus [0,n]$ belong to $\mathcal{A}$ but that they should all belong to the same $\mathcal{A}_\delta$. Moreover, both properties are an immediate consequence of the following stronger property: for any $A \in \mathcal{A}_\delta$ and any $n\geq 0$, $A-n\in \mathcal{A}_\delta$ and $A+n\in \mathcal{A}_\delta$. 

(b) It is easy to see that if $\mathcal{A}$ is an u.f.i.~upper Furstenberg family and $A\in \mathcal{A}$, then there is some $\delta\in D$ such that $A\setminus F \in \mathcal{A}_\delta$ for any finite set $F$. A similar remark applies to f.i.~Furstenberg families.
\end{remark} 

We provide here an extensive list of examples of upper Furstenberg families, many of which can be found in Grekos \cite{Gre05}. 

\begin{example}\label{ex-fur}
(a) Our foremost example is the family $\mathcal{A}_{\text{ud}}$ of sets $A$ of positive upper density, that is, with
\[
\overline{\text{dens}}\; A = \limsup_{N\to\infty} \frac{1}{N+1}\text{card} (A \cap [0,N])>0.
\]
The corresponding $\mathcal{A}_{\text{ud}}$-hypercyclicity is upper frequent hypercyclicity, see Definition \ref{d-uppreit}. We have that $\mathcal{A}_{\text{ud}}= \bigcup_{\delta>0}\bigcap_{n\geq 0} \mathcal{A}_{\delta,n}$, where $A\in \mathcal{A}_{\delta,n}$ if and only if 
\begin{equation}\label{eq-fhu}
\exists N\geq n:\;\frac{1}{N+1}\text{card}(A \cap [0,N])>\delta.
\end{equation}
Then each family $\mathcal{A}_{\delta,n}$ is finitely hereditary upward because if $A$ satisfies \eqref{eq-fhu} for some $N\geq n$ then $B\supset A\cap [0,N]$ implies that $B\in \mathcal{A}_{\delta,n}$. Uniform left-invariance follows from the fact that $\overline{\text{dens}}\; (A-n)=  \overline{\text{dens}}\; A$ for all $n\geq 0$. Hence the family of sets of positive upper density is an upper Furstenberg family.

Incidentally, the related P\'olya maximum density 
\[
\lim_{\alpha\to 1-}\limsup_{N\to\infty} \frac{\text{card}(A \cap [0,N])-\text{card}(A \cap [0,\alpha N])}{(1-\alpha)N}
\]
plays an important r\^ole in complex and harmonic analysis, see \cite[p. 559]{Pol29}, \cite{Koo98}. Since this density is at least as large as upper density (see \cite[Satz III]{Pol29}) it is easily seen that the families of sets of positive density coincide for these two notions.

(b) A second major example is provided by upper Banach density, which is defined as
\begin{align*}
\overline{\text{Bd}}\; A &= \sup\Big\{\delta\geq 0 : \forall n\geq 0\;\exists N\geq n, m\geq 0 :\frac{1}{N+1}\text{card}(A \cap [m,m+N])\geq \delta\Big\},
\end{align*}
see \cite{Fur81}, \cite{Hin90}, \cite{GTT10}. The terminology was apparently motivated by the early works of Banach (see \cite{Hin90}) but an equivalent variant was also defined by P\'olya \cite[p.~561]{Pol29}.

Upper Banach density coincides with what was originally called upper uniform density, that is,
\begin{align*}
\overline{\text{Bd}}\; A &= \lim_{N\to\infty} \limsup_{m\to\infty}\frac{1}{N+1}\text{card}(A \cap [m,m+N]),
\end{align*}
as introduced by Brown and Freedman \cite{BrFr90}, see also \cite{FrSe81}, \cite{BrFr87}. In fact, a more convenient form for us is that 
\begin{align*}
\overline{\text{Bd}}\; A &=  \inf_{N\geq 0} \sup_{m\geq 0}\frac{1}{N+1}\text{card}(A \cap [m,m+N]).
\end{align*}
The proof of the equality of these three (and further) values can be found in \cite{GTT10}, see also \cite{SaTo03}, \cite{Fre11}.

Let $\mathcal{A}_{\text{uBd}}$ be the family of sets of positive upper Banach density. The corresponding $\mathcal{A}_{\text{uBd}}$-hypercyclicity is reiterative hypercyclicity, see Definition \ref{d-uppreit}. Now, by the third representation of upper Banach density above we have that $\mathcal{A}_{\text{uBd}}= \bigcup_{\delta>0}\bigcap_{N\geq 0} \mathcal{A}_{\delta,N}$, where $A\in \mathcal{A}_{\delta,N}$ if and only if 
\[
\exists m\geq 0:\;\frac{1}{N+1}\text{card}(A \cap [m,m+N])>\delta.
\]
It follows as in (a) that the family of sets of positive upper Banach density is an upper Furstenberg family.

(c) One may generalize the upper density from example (a) in an obvious way. Let $W=(w_{n,k})_{n,k\geq 0}$ be an infinite matrix of non-negative numbers with the property
 that, for all $n\geq 0$, $w_{n,k}$ is decreasing in $k$, and that, for all $k\geq 0$, $w_{n,k}\to 0$ as $n\to\infty$. The corresponding upper $W$-density is defined as
\[
W\text{-}\overline{\text{dens}}\; A = \limsup_{N\to\infty} \sum_{k\in A} w_{N,k}.
\] 
Then the family $\mathcal{A}_{\text{u}W}$ of sets of positive upper $W$-density is an upper Furstenberg family. Indeed, $\mathcal{A}_{\text{u}W} = \bigcup_{\delta >0} \bigcap_{n\geq 0} \mathcal{A}_{\delta,n}$, where $A\in \mathcal{A}_{\delta,n}$ if and only if 
\[
\exists N\geq n, m\geq 0:\;\sum_{\substack{k\in A\\k\leq m}} w_{N,k}>\delta,
\]
which implies the finite hereditary upward property. It is also easy to see that 
$W\text{-}\overline{\text{dens}}\; (A-n)\geq  W\text{-}\overline{\text{dens}}\; A$ for all $n\geq 0$, which implies the uniform left-invariance property. In particular, when $W$ is the Ces\`aro-matrix we get the upper density from example (a). Other interesting choices of $W$ are given by matrices arising from summability theory; see Freedman and Sember \cite{FrSe81}.

(d) Let $w=(w_n)_{n\geq 0}$ be a non-negative decreasing sequence such that $w_0>0$ and $W_N:=\sum_{k=0}^N w_k\to\infty$ as $N\to\infty$. Then the corresponding upper weighted density is defined as
\[
w\text{-}\overline{\text{dens}}\; A = \limsup_{N\to\infty} \frac{1}{W_N}\sum_{\substack{k\leq N\\k\in A}} w_k.
\]
This is the special case of the upper $W$-density in (c) for $W=(\frac{w_k}{W_n})_{n\geq 0, k\leq n}$. Thus the family of sets of positive upper $w$-density is an upper Furstenberg family. A popular choice is given by the upper $\alpha$-density, where $w_n=\frac{1}{(n+1)^\alpha}$, $\alpha \leq 1$; in particular, for $\alpha=0$ we have the upper density of (a), for $\alpha=1$ the so-called upper logarithmic density.

(e) Upper exponential density is defined by
\[
\text{exp-}\overline{\text{dens}}\; A = \limsup_{N\to\infty}\frac{1}{\log (N+1)}\log^+(\text{card}(A \cap [0,N])).
\]
Again it is easily seen that the family of sets of positive upper exponential density is an upper Furstenberg family.

(f) Instead of positive density in the previous examples one may even require maximal density. Thus, let $\mathcal{A}_{\text{mud}}$ be the family of sets $A$ for which
\[
\overline{\text{dens}}\; A = 1.
\]
Then $\mathcal{A}_{\text{mud}}=\bigcap_{m,n\geq 1} \mathcal{A}_{(m,n)}$, where $A\in \mathcal{A}_{(m,n)}$ if and only if
\[
\exists N\geq n: \frac{1}{N+1}\text{card}(A \cap [0,N])>1-\frac{1}{m};
\]
one may take the set $D$ in the definition of upper Furstenberg families as an arbitrary singleton. It is shown as in (a) that $\mathcal{A}_{\text{mud}}$ is an upper Furstenberg family. In a completely analogous way, the families of sets of upper Banach density 1, of upper weighted density 1 (under the assumptions of (d)) and of upper exponential density 1 are upper Furstenberg families.

(g) A variant of the families in (d) was suggested by Shkarin \cite[Section 5]{Shk09}. Let $\varphi=(\varphi_k)_{k\geq 0}$ be a decreasing sequence of positive numbers with $\sum_{k=0}^\infty \varphi_k=\infty$ and define $\mathcal{A}_\varphi$ as the family of subsets $A$ of $\mathbb{N}_0$ such that
\[
\sum_{k\in A}{\varphi_k}=\infty.
\]
Then $\mathcal{A}_\varphi=\bigcap_{m> 0} \mathcal{A}_{m}$, where $A\in \mathcal{A}_{m}$ if and only if
\[
\exists N\geq 0: \sum_{\substack{k\in A\\k\leq N}} \varphi_k>m;
\]
one may take $D$ again as a singleton. One sees easily that $\mathcal{A}_\varphi$ is an upper Furstenberg family.

(h) We finally consider the family $\mathcal{A}_\infty$ of infinite subsets. This is the special case of the families considered in (g) when $\varphi_k=1$ for all $k\geq 0$. We therefore have an upper Furstenberg family. The corresponding $\mathcal{A}_\infty$-hypercyclicity is ordinary hypercyclicity provided the underlying space  is Hausdorff and has no isolated points (see \cite[Proposition 1.15]{GrPe11}). Note that the Furstenberg family $\mathcal{A}_{\neq \varnothing}$ of Example \ref{ex-fur1}(a), which also defines ordinary hypercyclicity, is not upper because it cannot satisfy condition (ii). 
\end{example}

We note that each of the upper Furstenberg families discussed above is u.f.i.

\begin{remark} For the Furstenberg family of sets of upper density 1, see (f), $\mathcal{A}_{\text{mud}}$-universal series were introduced by Papachristodoulos \cite{Pap13} and subsequently studied in \cite{KNP12} and \cite{MoMu15}. However, if $X$ is a Hausdorff topological space with at least two points and if $\mathcal{A}$ is one of the Furstenberg families of sets of upper density 1, of upper Banach density 1 or of upper weighted density 1 (under the assumptions stated in (d)) then no mapping on $X$ can be $\mathcal{A}$-hypercyclic. This can be proved easily by the argument in B\`es, Menet, Peris and Puig \cite[Proposition 3]{BMPP15}.
\end{remark}

In contrast to the last remark, the fact that $\log(\delta(N+1))/\log(N+1)\to 1$ as $N\to \infty$ shows the following.

\begin{proposition}
Any upper frequently hypercyclic mapping is $\mathcal{A}$-hypercyclic for the Furstenberg family of sets of upper exponential density $1$. 
\end{proposition}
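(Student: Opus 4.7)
The plan is to reduce the proposition to the purely combinatorial statement that every subset $A\subset \mathbb{N}_0$ of positive upper density already has upper exponential density equal to $1$. Once this is established, the proposition follows immediately: if $x$ is upper frequently hypercyclic for $T$ and $U\subset X$ is a non-empty open set, then $A:=\{n\geq 0:T^nx\in U\}$ has $\overline{\text{dens}}\, A>0$, hence $\text{exp-}\overline{\text{dens}}\, A=1$, hence $A$ belongs to the Furstenberg family of Example \ref{ex-fur}(f) applied to upper exponential density.

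To prove the combinatorial statement, I would fix $\delta\in(0,\overline{\text{dens}}\, A)$. By the definition of upper density as a $\limsup$, there is a strictly increasing sequence $(N_k)$ of integers with
\[
\frac{1}{N_k+1}\text{card}(A\cap[0,N_k])>\delta \quad\text{for all }k.
\]
For $k$ large enough we have $\delta(N_k+1)\geq 1$, so taking logarithms gives
\[
\frac{\log^{+}(\text{card}(A\cap[0,N_k]))}{\log(N_k+1)}\geq \frac{\log(\delta(N_k+1))}{\log(N_k+1)}=1+\frac{\log\delta}{\log(N_k+1)}\longrightarrow 1
\]
as $k\to\infty$, which is exactly the hint spelled out in the sentence preceding the proposition. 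Consequently $\text{exp-}\overline{\text{dens}}\, A\geq 1$. The reverse inequality is automatic, since $\text{card}(A\cap[0,N])\leq N+1$ forces $\log^{+}(\text{card}(A\cap[0,N]))\leq \log(N+1)$ for all $N$. Therefore $\text{exp-}\overline{\text{dens}}\, A=1$, as needed.

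There is no real obstacle in this argument; it is essentially a one-line calculation once one unwraps the definitions. The only minor points to watch are (i) passing from $\limsup$ to a specific subsequence $(N_k)$ witnessing densities above the threshold $\delta$, and (ii) making sure we only start the logarithmic estimate once $\delta(N_k+1)\geq 1$, so that $\log^{+}$ may be replaced by $\log$ without any case distinction.
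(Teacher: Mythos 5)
Your proof is correct and follows exactly the route the paper intends: the paper gives no separate proof beyond the remark that $\log(\delta(N+1))/\log(N+1)\to 1$, which is precisely the calculation you carry out to show that any set of positive upper density has upper exponential density $1$. Your write-up just makes the paper's one-line hint explicit, including the harmless bookkeeping about $\log^{+}$ and the witnessing subsequence.
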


\section{A Birkhoff theorem for upper Furstenberg families}\label{s-Bir}

We have the following analogue of the Birkhoff transitivity theorem in our general context.

\begin{theorem}\label{t-BirAHC}
Let $X$ be a complete metric space, $Y$ a separable metric space, and $T_n:X\to Y$, $n\geq 0$, continuous mappings. Let $\mathcal{A} = \bigcup_{\delta\in D}\bigcap_{\mu\in M} \mathcal{A}_{\delta,\mu}$ be an upper Furstenberg family. 
Consider the following:
\begin{itemize}
\item[\rm (a)] for any non-empty open subset $V$ of $Y$ there is some $\delta\in D$ such that for any non-empty open subset $U$ of $X$ there is some $x\in U$ such that
\[
\{n\geq 0 : T_n x\in V\} \in \mathcal{A}_\delta;
\]
\item[\rm (b)]  for any non-empty open subset $V$ of $Y$ there is some $\delta\in D$ such that for any non-empty open subset $U$ of $X$ and any $\mu\in M$ there is some $x\in U$ such that
\[
\{n\geq 0 : T_n x\in V\} \in \mathcal{A}_{\delta,\mu};
\]
\item[\rm (c)] the set of $\mathcal{A}$-universal points for $(T_n)$ is residual in $X$;
\item[\rm (d)] $(T_n)$ admits an $\mathcal{A}$-universal point.
\end{itemize}
Then \emph{(a)} $\Longrightarrow$ \emph{(b)} $\Longrightarrow$ \emph{(c)} $\Longrightarrow$ \emph{(d)}.
 
Moreover, if $X=Y$ and $T_n=T^n$, $n\geq 0$, for some continuous mapping $T:X\to X$, then the four assertions are equivalent.
\end{theorem}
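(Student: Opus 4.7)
Proof proposal for Theorem~\ref{t-BirAHC}.

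The implication (a)$\Rightarrow$(b) is immediate: by definition $\mathcal{A}_\delta=\bigcap_{\mu\in M}\mathcal{A}_{\delta,\mu}$, so any set belonging to $\mathcal{A}_\delta$ automatically belongs to each $\mathcal{A}_{\delta,\mu}$, and one may reuse the same $\delta$ and the same point $x$. The implication (c)$\Rightarrow$(d) is the Baire category theorem: $X$ is completely metrizable, so a residual set is dense and in particular non-empty.

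The substance is in (b)$\Rightarrow$(c). I fix a countable basis $(V_k)_{k\geq 0}$ of $Y$ and, by (b), choose $\delta_k\in D$ such that for every non-empty open $U\subset X$ and every $\mu\in M$ there is some $x\in U$ with $\{n\geq 0 : T_nx\in V_k\}\in\mathcal{A}_{\delta_k,\mu}$. For each pair $(k,\mu)$ I set
\[
G_{k,\mu}=\{x\in X : \{n\geq 0 : T_nx\in V_k\}\in \mathcal{A}_{\delta_k,\mu}\}.
\]
The key observation is that, thanks to the finite hereditary upward property,
\[
G_{k,\mu}=\bigcup_{F\in\mathcal{A}_{\delta_k,\mu},\,F\text{ finite}}\ \bigcap_{n\in F}T_n^{-1}(V_k),
\]
since $A\in\mathcal{A}_{\delta_k,\mu}$ is equivalent to $A$ containing some finite member of $\mathcal{A}_{\delta_k,\mu}$. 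Each $T_n^{-1}(V_k)$ is open and the intersection over $F$ is finite, so $G_{k,\mu}$ is open; by the choice of $\delta_k$ it is also dense. Since $M$ is countable, $G:=\bigcap_{k\geq 0}\bigcap_{\mu\in M}G_{k,\mu}$ is a dense $G_\delta$. For $x\in G$, for every non-empty open $V\subset Y$ I pick $V_k\subset V$; then $\{n : T_nx\in V\}\supset\{n : T_nx\in V_k\}\in\bigcap_\mu\mathcal{A}_{\delta_k,\mu}=\mathcal{A}_{\delta_k}\subset\mathcal{A}$, and the hereditary upward property of $\mathcal{A}$ gives $\{n : T_nx\in V\}\in\mathcal{A}$. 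Hence $G$ is contained in the set of $\mathcal{A}$-universal points, which is therefore residual. The main obstacle here is precisely the step that turns $G_{k,\mu}$ into an open set, and it is exactly the finite hereditary upward clause in the definition of an upper Furstenberg family that makes this work; countability of $M$ is needed to keep the intersection a $G_\delta$.

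For the equivalence in the case $T_n=T^n$, I only need (d)$\Rightarrow$(a). Let $x\in X$ be $\mathcal{A}$-hypercyclic for $T$, and fix a non-empty open $V\subset X$. Then $A_V:=\{n\geq 0 : T^n x\in V\}\in\mathcal{A}$, and uniform left-invariance furnishes a single $\delta\in D$ (depending on $V$ only) such that $A_V-m\in\mathcal{A}_\delta$ for every $m\geq 0$. Since $\mathcal{A}$ is proper, Proposition~\ref{p-afhc}(a) says the orbit $\{T^m x : m\geq 0\}$ is dense in $X$; given any non-empty open $U\subset X$, choose $m$ with $y:=T^m x\in U$. A direct computation yields
\[
\{n\geq 0 : T^n y\in V\}=\{n\geq 0 : T^{n+m}x\in V\}=A_V-m\in\mathcal{A}_\delta,
\]
which is (a). This closes the cycle (d)$\Rightarrow$(a)$\Rightarrow$(b)$\Rightarrow$(c)$\Rightarrow$(d) in the iteration case.
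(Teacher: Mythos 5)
Your proof is correct and follows essentially the same route as the paper: a permutation of quantifiers for (a) $\Longrightarrow$ (b), a Baire category argument over the countable family of open dense sets $G_{k,\mu}$ for (b) $\Longrightarrow$ (c), and uniform left-invariance combined with density of the orbit (Proposition~\ref{p-afhc}(a)) for (d) $\Longrightarrow$ (a) in the iterate case. The only (harmless) variation is that you establish openness of $G_{k,\mu}$ by exhibiting it as a union of finite intersections $\bigcap_{n\in F}T_n^{-1}(V_k)$ over finite members $F$ of $\mathcal{A}_{\delta_k,\mu}$, whereas the paper argues directly with a neighbourhood of a given point; both rest on exactly the finite hereditary upward property.
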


\begin{proof}
(a) $\Longrightarrow$ (b). This is nothing more than a permutation of quantifiers from ``$\exists x\;\forall\mu$'' to ``$\forall\mu\;\exists x$.''

(b) $\Longrightarrow$ (c). Let $(V_k)_{k\geq 1}$ be a countable base of non-empty open sets in $Y$. For $k\geq 1$, let $\delta_k\in D$ be the parameter associated to $V_k$ by (b). We consider the set
\begin{equation}\label{eq-E}
E:= \bigcap_{k\geq 1,\mu\in M} O_{k,\mu},
\end{equation}
where 
\[
O_{k,\mu} = \{ x\in X : \{n\geq 0 : T_n x\in V_k\} \in \mathcal{A}_{\delta_k,\mu}\}.
\] 
Then each set $O_{k,\mu}$ is open. Indeed, let $A:=\{n\geq 0 : T_n x\in V_k\} \in \mathcal{A}_{\delta_k,\mu}$. Let $F\subset\mathbb{N}_0$ be a finite set associated to $A$ by the finite hereditary upward property. By continuity of $T$ there is a neighbourhood $W$ of $x$ such that $T_n y\in V_k$ for any $y\in W$ and $n\in A\cap F$. Consequently,
\[
\{n\geq 0 : T_n y\in V_k\}\in \mathcal{A}_{\delta_k,\mu}
\]
for any $y\in W$, so that $O_{k,\mu}$ is an open set.

On the other hand, condition (b) tells us that each set $O_{k,\mu}$ is dense. It follows from the Baire category theorem that $E$ is a dense $G_\delta$-set. Now, clearly, every point in $E$ is $\mathcal{A}$-universal, which implies (c).

(c) $\Longrightarrow$ (d) is trivial.

Now suppose that $X=Y$ and that $T:X\to X$ is a continuous mapping. Suppose that (d) holds, and let $y$ be an $\mathcal{A}$-universal point for $T$. Let $V$ be a non-empty open set in $X$. By definition,
\[
\{n\geq 0 : T^n y \in V\} \in \mathcal{A},
\]
which implies by uniform left-invariance that there is some $\delta\in D$ such that, for all $m\geq 0$,
\[
\{n\geq 0 : T^n y \in V\}-m \in \mathcal{A}_{\delta}.
\]
Let $U$ be another non-empty open set. Since $\mathcal{A}$ is proper, Proposition \ref{p-afhc}(a) implies that there is some $m\geq 0$ such that $x:=T^my\in U$. Thus,
\[
\{n\geq 0 : T^nx\in V\} = \{k\geq 0 : T^ky\in V\}-m \in \mathcal{A}_{\delta},
\]
which shows (a).
\end{proof}

\begin{remark} 
As usual, the result holds, more generally, on any Baire space $X$ and any second countable space $Y$.
\end{remark}

Another look at the preceding proof shows that if the set $D$ is a singleton then the set $E$ in \eqref{eq-E} is actually the set of all $\mathcal{A}$-universal points. Thus we have the following.

\begin{corollary}\label{c-gdelta}
Let $T:X\to X$ be a continuous mapping on a separable complete metric space $X$, and let $\mathcal{A}$ be an upper Furstenberg family that has a representation with ${ \text{\rm card}}(D)=1$. Then the set of $\mathcal{A}$-hypercyclic points for $T$ is either empty or a dense $G_\delta$-set.
\end{corollary}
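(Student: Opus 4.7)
The plan is to revisit the Baire category argument in the proof of Theorem \ref{t-BirAHC} while exploiting the extra information that $D=\{\delta_0\}$ is a singleton. If $\mathcal{A}HC(T)=\varnothing$ there is nothing to prove, so assume the set is non-empty. By the equivalence established in Theorem \ref{t-BirAHC}, condition (a) holds for $(T^n)$; since $D$ has only one element, (a) simply says that for every non-empty open $V\subseteq X$ and every non-empty open $U\subseteq X$ there exists $x\in U$ with $\{n\geq 0:T^nx\in V\}\in\mathcal{A}_{\delta_0}$. In particular, condition (b) holds with $\delta_k=\delta_0$ for every $k$.

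Next, I would fix a countable base $(V_k)_{k\geq 1}$ of non-empty open sets and form the set $E$ from \eqref{eq-E} with the common parameter $\delta_0$, i.e.
\[
E=\bigcap_{k\geq 1,\,\mu\in M}O_{k,\mu},\qquad O_{k,\mu}=\{x\in X:\{n\geq 0:T^nx\in V_k\}\in\mathcal{A}_{\delta_0,\mu}\}.
\]
The argument already carried out for (b) $\Longrightarrow$ (c) shows each $O_{k,\mu}$ is open (finite hereditary upward property) and dense (condition (b)). Since $M$ is countable and $X$ is completely metrizable, Baire's theorem yields that $E$ is a dense $G_\delta$ subset of $X$.

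The one point that goes beyond the argument of Theorem \ref{t-BirAHC} is the identity $E=\mathcal{A}HC(T)$. One direction is the standard one: given $x\in E$ and a non-empty open $U$, choose $V_k\subseteq U$; then $\{n:T^nx\in V_k\}\in\bigcap_{\mu}\mathcal{A}_{\delta_0,\mu}=\mathcal{A}_{\delta_0}\subseteq\mathcal{A}$, and hereditary upwardness promotes this to $\{n:T^nx\in U\}\in\mathcal{A}$. For the reverse inclusion, which is where the singleton assumption is essential, observe that if $x\in\mathcal{A}HC(T)$ then $\{n:T^nx\in V_k\}\in\mathcal{A}=\mathcal{A}_{\delta_0}\subseteq\mathcal{A}_{\delta_0,\mu}$ for every $k$ and $\mu$, so $x\in O_{k,\mu}$ for all $k,\mu$, i.e.~$x\in E$.

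There is no real obstacle here beyond keeping track of the singleton hypothesis: the content of the corollary is simply that when $\bigcup_{\delta\in D}$ collapses to a single $\mathcal{A}_{\delta_0}$, the $G_\delta$ description of $E$ becomes an intrinsic description of $\mathcal{A}HC(T)$, and the dichotomy follows from combining this with the Baire category part of Theorem \ref{t-BirAHC}.
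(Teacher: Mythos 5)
Your argument is correct and follows essentially the same route as the paper: the authors likewise observe that when $D$ is a singleton the set $E$ from \eqref{eq-E} coincides with $\mathcal{A}HC(T)$, and then invoke the Baire category portion of Theorem \ref{t-BirAHC}. Your verification of the two inclusions $E=\mathcal{A}HC(T)$ (using $\mathcal{A}=\mathcal{A}_{\delta_0}=\bigcap_\mu\mathcal{A}_{\delta_0,\mu}$ for the reverse direction) is exactly the ``another look at the preceding proof'' the paper leaves implicit.
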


This result applies, for example, to the $\mathcal{A}$-hypercyclicities with respect to the Furstenberg families of upper exponential density 1 or for $\mathcal{A}_\varphi$, which includes ordinary hypercyclicity. We do not know, however, if the set $UFHC(T)$ of upper frequently hypercyclic points is always a $G_\delta$-set.

We spell out the two particular cases of Theorem \ref{t-BirAHC} that are of greatest interest to us.

\begin{corollary}\label{c-BirUFHC}
Let $T:X\to X$ be a continuous mapping on a separable complete metric space $X$. Then the following assertions are equivalent:
\begin{itemize}
\item[\rm (a)] for any non-empty open subset $V$ of $X$ there is some $\delta >0$ such that for any non-empty open subset $U$ of $X$ there is some $x\in U$ such that
\[
\overline{\text{\rm dens}}\;\{n\geq 0 : T^n x\in V\} >\delta;
\]
\item[\rm (b)] for any non-empty open subset $V$ of $X$ there is some $\delta>0$ such that for any non-empty open subset $U$ of $X$ and any $n\geq 0$ there are some $x\in U$ and $N\geq n$ such that
\[
\frac{1}{N+1}\text{\rm card}\{n\leq N : T^n x\in V\}>\delta;
\]
\item[\rm (c)] $T$ is upper frequently hypercyclic.
\end{itemize}
In that case the set $UFHC(T)$ of upper frequently hypercyclic points for $T$ is residual.
\end{corollary}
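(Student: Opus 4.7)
The plan is to derive this corollary as a direct specialization of Theorem \ref{t-BirAHC} to the upper Furstenberg family $\mathcal{A}_{\text{ud}}$ of sets of positive upper density, as described in Example \ref{ex-fur}(a). Recall the representation $\mathcal{A}_{\text{ud}} = \bigcup_{\delta>0}\bigcap_{n\geq 0}\mathcal{A}_{\delta,n}$, where $A\in \mathcal{A}_{\delta,n}$ iff there exists $N\geq n$ with $\frac{1}{N+1}\text{card}(A\cap [0,N])>\delta$; here $D=(0,\infty)$ and $M=\mathbb{N}_0$. Since we work with iterates $T_n=T^n$ on a separable complete metric space $X$, the ``moreover'' part of Theorem \ref{t-BirAHC} yields the equivalence of its four conditions, and $\mathcal{A}_{\text{ud}}$-hypercyclicity coincides with upper frequent hypercyclicity; the residuality statement about $UFHC(T)$ is then supplied by condition (c) of the theorem.

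Next I would translate each item of the corollary into the corresponding item of the theorem. Condition (c) of the corollary is literally condition (d) of the theorem for $\mathcal{A}=\mathcal{A}_{\text{ud}}$. Condition (b) of the corollary is obtained by substituting the explicit description of $\mathcal{A}_{\delta,n}$ into condition (b) of the theorem: membership of $\{k\geq 0 : T^k x\in V\}$ in $\mathcal{A}_{\delta,n}$ unwinds at once to the existence of some $N\geq n$ with $\frac{1}{N+1}\text{card}\{k\leq N : T^k x\in V\}>\delta$.

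The only point needing a short argument is the translation of condition (a). Membership of a set $A$ in $\mathcal{A}_\delta=\bigcap_{n\geq 0}\mathcal{A}_{\delta,n}$ says exactly that for every $n$ there is some $N\geq n$ with $\frac{1}{N+1}\text{card}(A\cap [0,N])>\delta$, which is equivalent to $\overline{\text{dens}}\, A\geq \delta$. Hence condition (a) of the theorem gives condition (a) of the corollary after replacing the threshold $\delta$ by $\delta/2$, while conversely any set $A$ with $\overline{\text{dens}}\, A>\delta$ lies in $\mathcal{A}_\delta$, so condition (a) of the corollary implies condition (a) of the theorem with the same $\delta$.

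I do not expect any genuine obstacle: the work is purely bookkeeping, matching the quantifier pattern and the strict/non-strict inequality in the definition of $\mathcal{A}_{\delta,n}$ with the $>\delta$ appearing in the statement. Once the three items of the corollary are identified with items (a), (b), (d) of the theorem, the cyclic implications and the residuality of $UFHC(T)$ transfer immediately.
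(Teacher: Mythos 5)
Your proposal is correct and follows exactly the paper's route: the corollary is simply Theorem \ref{t-BirAHC} specialized to the upper Furstenberg family $\mathcal{A}_{\text{ud}}$ with the representation from Example \ref{ex-fur}(a), and your handling of the strict-versus-nonstrict inequality via the $\delta/2$ adjustment is the right bookkeeping (your parenthetical claim that membership in $\mathcal{A}_\delta$ is \emph{equivalent} to $\overline{\text{dens}}\,A\geq\delta$ is slightly imprecise in one direction, but you only use the two correct implications, so nothing is affected).
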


This result improves Proposition 21 of Bayart and Ruzsa \cite{BaRu15}, which had been the starting point of our investigation.

For the second special case one has to note that once a mapping $T$ is reiteratively hypercyclic then every hypercyclic point for $T$ is even reiteratively hypercyclic (see \cite[Theorem 14]{BMPP15}), so that $RHC(T)=HC(T)$, which then is a dense $G_\delta$-set, see \cite[Theorem 1.16, Exercise 1.2.3]{GrPe11}. 

\begin{corollary}\label{c-BirRHC}
Let $T:X\to X$ be a continuous mapping on a separable complete metric space $X$. Then the following assertions are equivalent:
\begin{itemize}
\item[\rm (a)] for any non-empty open subset $V$ of $X$ there is some $\delta >0$ such that for any non-empty open subset $U$ of $X$ there is some $x\in U$ such that
\[
\overline{\text{\rm Bd}}\;\{n\geq 0 : T^n x\in V\} >\delta;
\]
\item[\rm (b)] for any non-empty open subset $V$ of $X$ there is some $\delta>0$ such that for any non-empty open subset $U$ of $X$ and any $N\geq 0$ there are some $x\in U$ and $m\geq 0$ such that
\[
\frac{1}{N+1}\text{\rm card}\{n \in [m, m+N] : T^n x\in V\}>\delta;
\]
\item[\rm (c)] $T$ is reiteratively hypercyclic.
\end{itemize}
In that case the set $RHC(T)=HC(T)$ of reiteratively hypercyclic points for $T$ is a dense $G_\delta$-set.
\end{corollary}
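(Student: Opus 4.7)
The plan is to obtain Corollary \ref{c-BirRHC} as a direct specialization of Theorem \ref{t-BirAHC} to the upper Furstenberg family $\mathcal{A}_{\text{uBd}}$ of sets of positive upper Banach density, using the representation $\mathcal{A}_{\text{uBd}}= \bigcup_{\delta>0}\bigcap_{N\geq 0} \mathcal{A}_{\delta,N}$ from Example \ref{ex-fur}(b). Here $D=(0,\infty)$ indexes the density threshold and the countable set $M=\mathbb{N}_0$ indexes the window length $N$. By definition, reiterative hypercyclicity is precisely $\mathcal{A}_{\text{uBd}}$-hypercyclicity, so in the setting $X=Y$, $T_n=T^n$ the corollary's condition (c) coincides with condition (d) of the theorem, and the moreover part of Theorem \ref{t-BirAHC} guarantees that all four assertions there are equivalent.

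The key step is to translate conditions (a) and (b) of the theorem into the density-theoretic form of the corollary. The definition of $\mathcal{A}_{\delta,N}$ makes condition (b) a direct transcription. For condition (a), membership of $A$ in $\bigcap_{N\geq 0}\mathcal{A}_{\delta,N}$ unravels to
\[
\inf_{N\geq 0}\sup_{m\geq 0}\frac{1}{N+1}\text{card}(A\cap[m,m+N])\geq \delta,
\]
which by the third representation of upper Banach density in Example \ref{ex-fur}(b) is exactly $\overline{\text{Bd}}\,A\geq \delta$. Replacing $\delta$ by $\delta/2$ upgrades this to the strict inequality appearing in the corollary's (a), so the two formulations are equivalent at the level of the uniform threshold.

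For the residuality claim, I would then invoke \cite[Theorem 14]{BMPP15}, cited in the paragraph preceding the corollary, which ensures that as soon as $T$ is reiteratively hypercyclic, every hypercyclic vector is in fact reiteratively hypercyclic, so $RHC(T)=HC(T)$; the classical Birkhoff transitivity theorem then yields that this set is a dense $G_\delta$, as recorded in \cite[Theorem 1.16, Exercise 1.2.3]{GrPe11}. The only conceptual wrinkle in the whole argument is the strict-versus-weak inequality for the density threshold, which evaporates after the $\varepsilon$-adjustment of $\delta$ mentioned above; beyond that, the proof is pure bookkeeping against the machinery already developed.
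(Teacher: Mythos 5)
Your proposal is correct and follows exactly the route the paper intends: the corollary is the specialization of Theorem \ref{t-BirAHC} to the family $\mathcal{A}_{\text{uBd}}$ with the representation from Example \ref{ex-fur}(b), with the strict/weak inequality handled by halving $\delta$, and the identification $RHC(T)=HC(T)$ (hence the dense $G_\delta$ statement) taken from \cite[Theorem 14]{BMPP15} just as in the paragraph preceding the corollary. The paper gives no separate proof beyond this bookkeeping, so there is nothing to add.
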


The corollary easily implies Menet's result \cite{Men15} that every chaotic mapping on a complete metric space is reiteratively hypercyclic; but the proof is essentially the same as Menet's direct proof.

Here is another interesting consequence of Corollary \ref{c-BirRHC}. The classical Birkhoff transitivity theorem implies immediately that the inverse of an invertible hypercyclic mapping is also hypercyclic. The same property is then enjoyed by chaotic operators. However, it is an open problem whether the analogous statement holds for frequently hypercyclic operators, see \cite{BaGr06}, and the question also seems to be open for upper frequent hypercyclicity. We can give here a positive answer for reiterative hypercyclicity.

\begin{theorem}
Let $T$ be an invertible mapping on a complete metric space $X$. If $T$ is reiteratively hypercyclic then so is $T^{-1}$.
\end{theorem}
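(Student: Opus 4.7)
The plan is to verify condition (b) of Corollary \ref{c-BirRHC} for $T^{-1}$. The key trick exploiting the invertibility of $T$ is the substitution $y = T^l z$: for $n \in [l-N, l]$ one has $T^{-n}y = T^{l-n}z$, and as $n$ ranges through $[l-N,l]$, $l-n$ ranges through $[0,N]$. Thus a backward window of length $N+1$ for $y$ is realized as the forward window $[0,N]$ of $z$.

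Fix a non-empty open set $V \subset X$, and let $\delta > 0$ be the constant provided by condition (b) of Corollary \ref{c-BirRHC} applied to $T$ and the open set $V$. I claim the same $\delta$ works for $T^{-1}$. Given a non-empty open set $U \subset X$ and $N \geq 0$, consider the open set
\[
E := \Big\{z \in X : \text{card}\{k \in [0,N] : T^k z \in V\} > \delta(N+1)\Big\},
\]
which is open as a finite union of intersections of open sets of the form $T^{-k}(V)$. Applying condition (b) once more to $T$, this time with $X$ itself as the open set and with $N$ as given, we obtain $x \in X$ and $m \geq 0$ with $T^n x \in V$ for more than $\delta(N+1)$ values of $n \in [m, m+N]$. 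The point $z_0 := T^m x$ then lies in $E$, so $E$ is non-empty.

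Since $T$ is (reiteratively, hence plainly) hypercyclic on a nontrivial complete metric space, $X$ has no isolated points and $HC(T)$ is dense in $X$. Pick $z \in E \cap HC(T)$; its forward orbit is dense and meets $U$ at infinitely many indices, so we can select $l \geq N$ with $T^l z \in U$. Setting $y := T^l z \in U$ and $m' := l - N \geq 0$, the identity $T^{-n}y = T^{l-n}z$ for $n \in [m', m'+N]$ together with the membership $z \in E$ yields
\[
\text{card}\{n \in [m', m'+N] : T^{-n}y \in V\} = \text{card}\{k \in [0,N] : T^k z \in V\} > \delta(N+1),
\]
completing the verification of condition (b) for $T^{-1}$.

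The only substantive step is discovering the substitution $y = T^l z$ that converts the required backward-orbit statement for $T^{-1}$ into a forward-orbit statement for $T$; once this is in place, the argument is a clean combination of reiterative hypercyclicity (to produce a $z$ with the right forward-visit statistics inside a window of length $N+1$) and plain hypercyclicity (to route the orbit of $z$ into $U$ via the density of $HC(T)$). Note that condition (b), rather than (a), of Corollary \ref{c-BirRHC} is what makes the approach work, since it supplies both a uniform lower bound $\delta$ and the freedom to choose the window $[m,m+N]$ on demand.
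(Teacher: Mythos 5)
Your proof is correct and follows essentially the same route as the paper: there one fixes a reiteratively hypercyclic point $y$, finds a window $[m,m+N]$ with many visits to $V$, pushes forward to a time $m+N+k$ at which the orbit lies in $U$, and reads that window backwards under $T^{-1}$ --- exactly your substitution $y=T^{l}z$ combined with condition (b) of Corollary \ref{c-BirRHC}. The only blemish is the appeal to ``$X$ has no isolated points'' to produce arbitrarily large times $l$ with $T^{l}z\in U$ (this can fail, e.g.\ for a cyclic permutation of a finite set); the needed fact follows anyway because $T^{N}z$ is again hypercyclic for the invertible map $T$.
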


\begin{proof} We prove that $T^{-1}$ satisfies condition (b) of Corollary \ref{c-BirRHC}. For this, we fix a reiteratively hypercyclic point $y$ for $T$. Then let $V\subset X$ be a non-empty open set. By definition,
\[
\delta:=\overline{\text{Bd}}\{n\geq 0 : T^n y \in V\}>0.
\]
Let $U\subset X$ be a non-empty open set. Then for any $N\geq 0$ there is some $m\geq 0$ such that
\[
\frac{1}{N+1}\text{\rm card}\{n \in [m, m+N] : T^n y\in V\}>\frac{\delta}{2}.
\]
By reiterative hypercyclicity of $y$ there is then also some $k\geq 0$ such that $x:=T^{m+N+k}y\in U$. Going from this point $x$ backwards via $T^{-1}$ one obtains that
\[
\frac{1}{N+1}\text{\rm card}\{n \in [k, k+N] : (T^{-1})^n x\in V\} >\frac{\delta}{2},
\]
which had to be shown.
\end{proof}

The property of the family of sets of upper Banach density used in this proof seems to be quite specific to this family and the family $\mathcal{A}_\infty$, so that we do not have similar results for the other upper Furstenberg families discussed in Example \ref{ex-fur}.

\begin{remark} In a similar way one may deduce from Corollary \ref{c-BirUFHC} the result of Bayart and Ruzsa \cite{BaRu15} that if an invertible mapping $T$ on a complete metric space is frequently hypercyclic then its inverse $T^{-1}$ is upper frequently hypercyclic.
\end{remark}

\section{An $\mathcal{A}$-Hypercyclicity Criterion for upper Furstenberg families}\label{s-AHC}

From now on we will restrict our attention to the linear setting, that is, we consider sequences $(T_n)_n$ of  (continuous and linear) operators between topological vector spaces. 

The topology of a metrizable topological vector space may always be defined by an F-norm, see \cite[pp. 2--5]{KPR84}, \cite[Definition 2.9]{GrPe11}, which will be denoted indiscriminately by $\|\cdot\|$. An F-space is a metrizable topological vector space that is complete under some F-norm, and hence under any equivalent F-norm. 

For classical hypercyclicity, the Birkhoff transitivity theorem leads easily to the celebrated Hypercyclicity Criterion. In the same vein we have the following.

\begin{theorem}[$\mathcal{A}$-Hypercyclicity Criterion]\label{t-ahc}
Let $X$ be an F-space, $Y$ a separable metrizable topological vector space, and $T_n: X \rightarrow Y$, $n\geq 0$, operators. Let $\mathcal{A}=\bigcup_{\delta\in D} \mathcal{A}_\delta$ be an upper Furstenberg family. 

Suppose that there are a dense subset $X_0$ of $X$, a dense subset $Y_0$ of $Y$ and mappings $S_n:Y_0\rightarrow X$, $n\geq 0$, with the following property: For any $y\in Y_0$ and $\varepsilon>0$ there exist $A\in\mathcal{A}$ and $\delta\in D$ such that
\begin{itemize}
\item[\rm (i)] for any $x\in X_0$ there is some $B\in \mathcal{A}_\delta$, $B\subset A$, such that, for any $n\in B$,
\[
\|T_nx\|<\varepsilon;
\]
\item[\rm (ii)] 
\[
\sum _{n\in A} S_n y\; \text{converges};
\]
\item[\rm (iii)] for any $m\in A$,
\[
\|T_m \sum _{n\in A} S_n y-y\Big\| <\varepsilon.
\]
\end{itemize}
Then $(T_n)$ is $\mathcal{A}$-universal with a residual set of $\mathcal{A}$-universal vectors.
\end{theorem}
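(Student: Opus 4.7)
The plan is to reduce everything to Theorem \ref{t-BirAHC} by verifying its condition (b): for each non-empty open $V\subset Y$ I produce one $\delta\in D$ such that, for every non-empty open $U\subset X$ and every $\mu\in M$, some $x\in U$ satisfies $\{n\geq 0 : T_n x\in V\}\in \mathcal{A}_{\delta,\mu}$. Implication (b)$\Longrightarrow$(c) in that theorem then yields the residual set of $\mathcal{A}$-universal vectors (and $\mathcal{A}$-universality itself via (d), since $X$ is Baire).

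First, fix $V$ and choose $y_0\in Y_0\cap V$ together with $\varepsilon_0>0$ small enough that $\{y\in Y : \|y-y_0\|<2\varepsilon_0\}\subset V$. Applying the hypothesis of the criterion to $(y_0,\varepsilon_0)$ furnishes, once and for all, $A\in\mathcal{A}$, $\delta\in D$, and the vector $z:=\sum_{n\in A} S_n y_0\in X$ produced by (ii); crucially, $\delta$ depends only on $V$, as required by (b).

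Now, given any non-empty open $U\subset X$ and any $\mu\in M$, pick $u_0\in U$ and $\eta>0$ with $\{u\in X : \|u-u_0\|<\eta\}\subset U$, and set $x:=x_0+z$, where $x_0\in X_0$ is chosen (by density of $X_0$) to satisfy $\|x_0-(u_0-z)\|<\eta$; that is, $x_0$ approximates $u_0-z$ rather than $u_0$ itself. Then $\|x-u_0\|<\eta$, so $x\in U$. Applying (i) to $x_0$ produces $B\subset A$, $B\in\mathcal{A}_\delta$, with $\|T_n x_0\|<\varepsilon_0$ for all $n\in B$; combining this with (iii) via the triangle inequality gives $\|T_n x-y_0\|\leq\|T_n x_0\|+\|T_n z-y_0\|<2\varepsilon_0$ for $n\in B\subset A$, whence $T_n x\in V$.

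Thus $\{n\geq 0 : T_n x\in V\}\supset B\in \mathcal{A}_\delta=\bigcap_{\mu'\in M}\mathcal{A}_{\delta,\mu'}\subset\mathcal{A}_{\delta,\mu}$; since the finite hereditary upward property of $\mathcal{A}_{\delta,\mu}$ trivially implies ordinary hereditary upwardness (take $B\supset A\supset A\cap F$), the full set belongs to $\mathcal{A}_{\delta,\mu}$, completing the verification of (b). The main obstacle I expect is precisely the coupling between the requirements ``$x\in U$'' and ``$T_n x$ lies close to $y_0$ along a large enough set of $n$'': since condition (ii) only promises that $\sum_{n\in A} S_n y_0$ converges, with no control on the norm of $z$, one cannot approximate $u_0$ directly by a dense-set element; the device $x_0\approx u_0-z$ absorbs $z$ into the approximant and cleanly decouples the two demands without invoking any finite-invariance hypothesis on $\mathcal{A}$.
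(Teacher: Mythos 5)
Your proposal is correct and follows essentially the same route as the paper: both reduce to Theorem \ref{t-BirAHC} and use the identical key device of choosing $x_0\in X_0$ to approximate $u_0-z$ (i.e.\ $x_0\in U-z$) so that $x=x_0+z\in U$, then combining (i) and (iii) to get $T_nx\in V$ for $n\in B\in\mathcal{A}_\delta$. The only cosmetic difference is that you phrase the verification in terms of condition (b) of that theorem while the paper verifies the formally stronger condition (a); since your choice of $x$ does not depend on $\mu$, you have in fact established (a) as well.
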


\begin{remark}\label{r-ahc}
It seems more natural to demand, instead of condition (iii), the following three conditions: for any $m\in A$ we have that 
\begin{itemize}
\item[\rm (iiia)] 
\[
\Big\|T_m \sum _{\substack{n\in A\\n<m}} S_n y\Big\| <\varepsilon;
\]
\item[\rm (iiib)] 
\[
\Big\|T_m \sum _{\substack{n\in A\\n>m}} S_n y\Big\| <\varepsilon;
\]
\item[\rm (iiic)]
\[
\|T_mS_my -y\| <\varepsilon;
\]
\end{itemize}
the criterion still works, of course, but is (a priori) a weaker result.
\end{remark}

\begin{proof} It suffices to verify condition (a) of Theorem  \ref{t-BirAHC}. Thus, let $V$ be a non-empty open subset of $Y$. We can then choose $y_0\in V\cap Y_0$; let $\varepsilon>0$ be such that $y\in V$ whenever $\|y-y_0\|<2\varepsilon$. By assumption, there exist some $A\in\mathcal{A}$ and $\delta\in D$ such that (i)--(iii) hold. In particular,
\[
z:=\sum _{n\in A} S_n y_0
\]
converges in $X$ and, for any $m\in A$,
\begin{equation}\label{eq-AHC}
\|T_m z - y_0\| <{\varepsilon}.
\end{equation}

Now let $U$ be a non-empty open subset of $X$. By density of $X_0$ we can find some $x_0\in X_0$ such that
\[
x_0 \in U -z.
\]
By condition (i) there is then some $B\in \mathcal{A}_\delta$, $B\subset A$, such that, for any $m\in B$,
\begin{equation}\label{eq-AHC2}
\|T_mx_0\|<{\varepsilon}.
\end{equation}
Hence
\[
x:=x_0+z\in U,
\]
and it follows from \eqref{eq-AHC} and \eqref{eq-AHC2} that, for any $m\in B$,
\[
\|T_mx -y_0\|<2\varepsilon
\]
and hence $T_mx\in V$. Thus we have shown that $\{ n\geq 0 : T_nx\in V\}\in \mathcal{A}_\delta$.
\end{proof}

In particular we have the following.

\begin{corollary}[Upper Frequent Hypercyclicity Criterion]\label{c-ufhc}
Let $T: X \rightarrow X$ be an operator on a separable F-space $X$.

Suppose that there are dense subsets $X_0, Y_0$ of $X$ and mappings $S_n:Y_0\rightarrow X$, $n\geq 0$, with the following property: For any $y\in Y_0$ and $\varepsilon>0$ there exists $A\subset\mathbb{N}_0$ with $\overline{\text{\rm dens}}\;A>0$ and $\delta>0$ such that
\begin{itemize}
\item[\rm (i)] for any $x\in X_0$ there is some $B\subset A$ with $\overline{\text{\rm dens}}\;B>\delta$ such that, for any $n\in B$,
\[
\|T^nx\|<\varepsilon;
\]
\item[\rm (ii)] 
\[
\sum _{n\in A} S_n y\; \text{converges};
\]
\item[\rm (iii)] for any $m\in A$,
\[
\Big\|T^m \sum _{n\in A} S_n y-y\Big\| <\varepsilon.
\]
\end{itemize}
Then $T$ is upper frequently hypercyclic.
\end{corollary}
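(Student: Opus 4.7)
I would prove this as a direct specialization of the $\mathcal{A}$-Hypercyclicity Criterion (Theorem \ref{t-ahc}) to the Furstenberg family $\mathcal{A}_{\text{ud}}$ of sets of positive upper density. Recall from Example \ref{ex-fur}(a) that $\mathcal{A}_{\text{ud}}$ is an upper Furstenberg family with the representation
\[
\mathcal{A}_{\text{ud}} = \bigcup_{\delta > 0} \mathcal{A}_\delta, \qquad \mathcal{A}_\delta = \bigcap_{n \geq 0} \mathcal{A}_{\delta,n},
\]
where $A \in \mathcal{A}_{\delta,n}$ iff there exists $N \geq n$ with $\frac{1}{N+1}\,\text{card}(A \cap [0,N]) > \delta$. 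So to invoke Theorem \ref{t-ahc} I would take $X = Y$ to be the given separable F-space (both hypotheses of the theorem are satisfied), $T_n = T^n$, and $\mathcal{A} = \mathcal{A}_{\text{ud}}$.

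The only substantive step is the elementary translation
\[
B \in \mathcal{A}_\delta \;\Longleftrightarrow\; \overline{\text{dens}}\; B > \delta,
\]
which follows because $\limsup_{N\to\infty} a_N > \delta$ is equivalent to saying that for every $n \geq 0$ one has $\sup_{N \geq n} a_N > \delta$, i.e.\ some $N \geq n$ satisfies $a_N > \delta$. Applying this with $a_N = \frac{1}{N+1}\,\text{card}(B \cap [0,N])$ yields the stated equivalence, and in particular $A \in \mathcal{A}_{\text{ud}}$ iff $\overline{\text{dens}}\; A > 0$.

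Under this dictionary, hypotheses (i), (ii), (iii) of the corollary become verbatim the hypotheses of Theorem \ref{t-ahc}: the clause ``there exists $B \subset A$ with $\overline{\text{dens}}\; B > \delta$'' becomes ``there exists $B \in \mathcal{A}_\delta$ with $B \subset A$,'' while (ii) and (iii) are unchanged. Theorem \ref{t-ahc} therefore produces a residual set of $\mathcal{A}_{\text{ud}}$-hypercyclic vectors for $T$, which by Definition \ref{d-uppreit}(b) and Example \ref{ex-fur}(a) is exactly the set of upper frequently hypercyclic vectors for $T$. In particular $T$ is upper frequently hypercyclic, as required. There is no real obstacle: the only mildly delicate point is the handling of the strict inequalities in the $\limsup$, which is purely elementary.
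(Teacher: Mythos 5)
Your proposal is correct and is essentially the paper's own route: the paper states this corollary as an immediate specialization of Theorem~\ref{t-ahc} to the upper Furstenberg family $\mathcal{A}_{\text{ud}}$ of Example~\ref{ex-fur}(a), with no further argument. One tiny caveat: your claimed equivalence $B\in\mathcal{A}_\delta \Leftrightarrow \overline{\text{dens}}\,B>\delta$ is only an implication in general (membership in $\mathcal{A}_\delta$ yields $\overline{\text{dens}}\,B\geq\delta$), but the direction you actually need, namely $\overline{\text{dens}}\,B>\delta \Rightarrow B\in\mathcal{A}_\delta$, does hold, so the argument is unaffected.
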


In the same way we have a Reiterative Hypercyclicity Criterion when we replace upper density by upper Banach density.

\begin{remark}\label{r-compAHC} 
B\`es, Menet, Peris and Puig \cite{BMPP15} also state an $\mathcal{A}$-Hypercyclicity Criterion, and it is important to understand the difference between their criterion and ours. First of all, our criterion requires an upper Furstenberg family, while theirs works for very general such families; it covers, in particular, also the case of frequent hypercyclicity. However, not even for upper Furstenberg families the two criteria are comparable. The essential difference is that the criterion of B\`es et al. requires a sequence $(A_k)$ of sets in $\mathcal{A}$ while ours works with a single set $A$. On the other hand, our criterion requires that $(T_n)$ is small on a dense subset of $X$, for suitably many $n$ (see condition (i)), which does not appear in B\`es et al. These differences will have consequences later on, see Remark \ref{r-compChar} below.
\end{remark}

Still, Theorem \ref{t-BirAHC} allows us to deduce, for many upper Furstenberg families, the criterion of B\`es et al. under somewhat weaker hypotheses. 

\begin{theorem}[$\mathcal{A}$-Hypercyclicity Criterion, second version]\label{t-ahc2}
Let $X$ be an F-space, $Y$ a separable metrizable topological vector space, and $T_n: X \rightarrow Y$, $n\geq 0$, operators. Let $\mathcal{A}$ be an u.f.i.~upper Furstenberg family. 

Suppose that there are a dense subset $Y_0$ of $Y$, mappings $S_n:Y_0\rightarrow X$, $n\geq 0$, and sets $A_k\in\mathcal{A}$, $k\geq 1$, such that for any $y\in Y_0$ we have the following:
\begin{itemize}
\item[\rm (i)] for any $k\geq 1$,  
\[
\sum_{n\in A_k} S_n y\; \text{converges};
\]
\item[\rm (ii)] for any $k_0\geq 1$ and any $\varepsilon>0$ there exists $k\geq k_0$ such that for any finite set $F\subset A_k$ and any $m\in \bigcup_{l\geq 1} A_l,$ $m\notin F$, we have that
\[
\Big\|T_m\sum_{n\in F}S_ny\Big\|<\varepsilon,
\]
and such that, for any $\delta>0$, there exists $l_0\geq 1$ such that for any finite set $F\subset A_k$ and any $m\in \bigcup_{l\geq l_0} A_l,$ $m\notin F$, we have that
\[
\Big\|T_m\sum_{n\in F}S_ny\Big\|<\delta;
\]
\item[\rm (iii)]
\[
\sup_{m\in A_k}\|T_m S_m y-y\| \to 0\quad\text{as $k\to\infty$.}
\]
\end{itemize}
Then $(T_n)$ admits an $\mathcal{A}$-universal vector.
\end{theorem}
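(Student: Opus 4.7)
By Theorem \ref{t-BirAHC} it suffices to produce a single $\mathcal{A}$-universal vector for $(T_n)$ (condition (d) there). The plan is to build $x$ as an absolutely convergent series $x=\sum_{L\geq 1}\Delta_L$, where $\Delta_L=\sum_{n\in F_L}S_n y^{(p_L)}$ targets one set $V_{p_L}$ from a countable base $(V_p)_{p\geq 1}$ of non-empty open subsets of $Y$ with centres $y^{(p)}\in Y_0$ and radii $\varepsilon_p>0$; such a base exists because $Y$ is separable and $Y_0$ is dense. Tasks $(p_L)_{L\geq 1}$ are enumerated so that each $p$ occurs infinitely often.

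For each $p$, I use (iii) and the first part of (ii) to pick $k^{(p)}$ large enough that $\sup_{m\in A_{k^{(p)}}}\|T_m S_m y^{(p)}-y^{(p)}\|<\varepsilon_p/4$ and that $\|T_m\sum_{n\in F}S_n y^{(p)}\|$ can be made arbitrarily small for finite $F\subset A_{k^{(p)}}$ and $m\in\bigcup_{l\geq 1}A_l$ with $m\notin F$. Processing tasks in order, I also require $k^{(p_L)}$ to exceed the $l_0$-thresholds supplied by the second part of (ii) applied to each earlier $y^{(p_{L'})},k^{(p_{L'})}$, $L'<L$, with summably small tolerance; this is a finite constraint at each step $L$.

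The finite windows $F_L\subset A_{k^{(p_L)}}$ are chosen inductively, each placed strictly above all previous $F_{L'}$ in $\mathbb{N}_0$, so pairwise disjointness is automatic, and late enough in $A_{k^{(p_L)}}$ that $\|\Delta_L\|<2^{-L}$ (possible by (i) and vanishing of tails). Using uniform finite invariance of $\mathcal{A}$ together with careful matching of window sizes to task-visit frequencies, I arrange that for every $p$,
\[
\bigsqcup_{L\,:\,p_L=p}F_L\in\mathcal{A}_{\delta^{(p)}}
\]
for some $\delta^{(p)}\in D$. Then $x:=\sum_L\Delta_L$ converges in $X$, and for $m\in F_L$ I split $T_mx=T_m\Delta_L+\sum_{L'\neq L}T_m\Delta_{L'}$: the diagonal term lies within $\varepsilon_{p_L}/2$ of $y^{(p_L)}$ by (iii) and the first part of (ii); cross-terms with $L'>L$ are controlled by the first part of (ii) at step $L'$, chosen with $\varepsilon$ summable in $L'$; cross-terms with $L'<L$ are controlled by the second part of (ii) applied to $y^{(p_{L'})},k^{(p_{L'})}$, exactly because $k^{(p_L)}\geq l_0$. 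Summing these estimates gives $\|T_mx-y^{(p_L)}\|<\varepsilon_{p_L}$, so $T_mx\in V_{p_L}$, and hence $\{n\geq 0:T_nx\in V_p\}\supset\bigsqcup_{L:\,p_L=p}F_L\in\mathcal{A}$. Since $(V_p)$ is a base of $Y$, $x$ is $\mathcal{A}$-universal.

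The hardest step is the combinatorial construction of the windows: they must be pairwise disjoint, so that a diagonal term $T_m S_m y^{(p_{L'})}$ from a competing task never spoils the estimate at $m\in F_L$, while their per-task unions must still lie in $\mathcal{A}$. Uniform finite invariance allows initial segments of each $A_{k^{(p)}}$ to be discarded freely, and the freedom to let window sizes grow with $L$ compensates for the gaps created by other tasks' interleaved windows, so that both requirements can be met simultaneously.
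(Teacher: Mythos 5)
Your construction is a direct, constructive one (the appeal to Theorem \ref{t-BirAHC}(d) is vacuous, since producing a universal vector is exactly the conclusion), whereas the paper avoids any explicit construction: it applies Theorem \ref{t-BirAHC} to the restriction of $(T_n)$ to the closed set $\widetilde{X}$ generated by the finite blocks $\sum_{j}\sum_{n\in F_j}S_ny_j$, so that for each target $V\ni y_l$ a \emph{different} point of $U\cap\widetilde{X}$ may be used, whose visit set is a full tail $B_l\cap[N,\infty)$ of a single set $B_l\in\mathcal{A}$ --- a set lying in some $\mathcal{A}_\delta$ directly by uniform finite invariance. Because your single vector must serve all targets simultaneously through pairwise disjoint \emph{finite} windows, you inherit an obligation the paper never faces, and this is where the gap lies: the claim that $\bigsqcup_{L:\,p_L=p}F_L\in\mathcal{A}$ is the heart of the argument and is not justified. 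For a general Furstenberg family, a union of finite pieces of a set $A\in\mathcal{A}$, each placed above an externally prescribed (and rapidly growing) threshold, need not belong to $\mathcal{A}$ --- for upper density, finitely many points placed near each $2^j$ give a set of density $0$. The mechanism you invoke, ``matching window sizes to task-visit frequencies,'' is a density-counting heuristic that has no meaning for an abstract u.f.i.\ upper Furstenberg family. What would actually rescue the claim is never mentioned: the countability of $M$ together with the finite hereditary upward property. One must enumerate $M=\{\mu_1,\mu_2,\dots\}$ and, at the $j$-th visit of task $p$ with threshold $n_j$, use $A\setminus[0,n_j]\in\mathcal{A}_{\delta}\subset\mathcal{A}_{\delta,\mu_j}$ (u.f.i.) to extract a finite window $A\cap(n_j,N_j]$ whose presence already forces membership in $\mathcal{A}_{\delta,\mu_j}$; only then is the union in $\bigcap_j\mathcal{A}_{\delta,\mu_j}=\mathcal{A}_\delta$. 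Without this step the proof does not go through.

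There is also an internal inconsistency in the analytic estimates. The union-in-$\mathcal{A}$ claim (and the only way to bound the combined interference of the infinitely many windows of a competing task $p'$ by a \emph{single} small quantity, namely by applying condition (ii) to the finite truncations of $\bigsqcup_{L':p_{L'}=p'}F_{L'}$, which are finite subsets of one set $A_{k^{(p')}}$) forces you to fix one index $k^{(p)}$ per task. But you then control the cross-terms with $L'>L$ ``by the first part of (ii) at step $L'$, chosen with $\varepsilon$ summable in $L'$'': since each task is visited infinitely often, a tolerance shrinking in $L'$ requires the index $k$ supplied by (ii) to change from visit to visit of the same task, which destroys the single-$A_k$-per-task structure you need elsewhere. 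The estimate can be repaired --- bound the cross-terms task-by-task rather than window-by-window, choosing the tolerance for task $p'$ in the first part of (ii) as $2^{-p'}\min_{q\leq p'}\varepsilon_q/4$ and using the second part of (ii) (the $l_0$-thresholds) only for tasks $p'<p_L$ --- but as written the two requirements on $k$ are incompatible. A further small point: to make $\|\Delta_L\|<2^{-L}$ you need the chosen finite window to be an interval $A\cap(n,N]$ of $A$ (possible, since the witness set $F$ in the finite hereditary upward property may be taken to be $[0,\max F]$), because condition (i) only gives ordered convergence and arbitrary finite subsets of a tail need not have small partial sums.
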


\begin{proof} Let $(y_l)_{l\geq 1}$ be a dense sequence in $Y_0$. As in the proof in \cite{BMPP15}, conditions (ii) and (iii) imply that there exists a subsequence $(B_k):=(A_{n_k})$ of $(A_k)$ such that, for all $l\geq 1$,
\begin{itemize}
\item[\rm (I)] for any finite set $F\subset B_l$ and any $m\in \bigcup_{k\geq 1} B_k,$ $m\notin F$, 
\[
\Big\|T_m\sum_{n\in F}S_ny_l\Big\|<\frac{1}{2^l};
\]
\item[\rm (II)]
for any $j<l$, any finite set $F\subset B_j$ and any $m\in B_l,$ $m\notin F$, 
\[
\Big\|T_m\sum_{n\in F}S_ny_j\Big\|<\frac{1}{l2^l};
\]
\item[\rm (III)] for any $m\in B_l,$  
\[
\|T_m S_m y_l-y_l\| <\frac{1}{2^l}.
\]
\end{itemize}
Now let $\widetilde{X}$ be the closure of the set
\begin{equation}\label{eq-x0}
X_0 := \Big\{ \sum_{j=1}^J \sum_{n\in F_j} S_n y_j : J\geq 1, F_j\subset B_j \text{ finite}\Big\}
\end{equation}
in $X$, where the $F_j$ may be empty. We will apply Theorem \ref{t-BirAHC} to the mappings $T_n$ restricted to $\widetilde{X}$, which is a complete metric space.

Thus, let $V$ be a non-empty open subset of $Y$. Then there exists some $l\geq 1$ such that the open ball around $y_l$ of radius $\frac{4}{2^l}$ is contained in $V$. Since $B_l\in \mathcal{A}$ there is by uniform finite invariance some $\delta>0$ such that $B_l\cap [N,\infty)\in \mathcal{A}_\delta$ for all $N\geq 0$.

Now let $U$ be a non-empty open subset of $\widetilde{X}$. By definition of this space there are finitely many finite sets $F_j\subset B_j,$ $1\leq j\leq J$, such that
\[
\sum_{j=1}^J \sum_{n\in F_j} S_n y_j \in U.
\]
Then, for $N$ sufficiently large,
\[
x:= \sum_{j=1}^J \sum_{n\in F_j} S_n y_j + \sum_{n\in B_l, n\geq N} S_n y_l \in U,
\]
where we have used condition (i); note that $x$ belongs to $\widetilde{X}$ as a limit of elements in $X_0$. We may assume that $J\geq l$ and that $N>\max_{1\leq j\leq J}\max F_j$. Then we have for any $m\in B_l$, $m\geq N$,
\begin{align*}
T_mx &- y_l =\\
&T_m \sum_{j=1}^{l-1} \sum_{n\in F_j} S_n y_j + T_m \sum_{j=l+1}^J \sum_{n\in F_j} S_n y_j 
+ \lim_{M\to\infty}\Big(T_m \sum_{n\in C_{l,M,m}} S_n y_l\Big) + T_mS_my_l -y_l,
\end{align*}
where $C_{l,M,m} = F_l \cup (B_l\cap [N,M])\setminus\{m\}$. Now, by (I), (II) and (III),
\[
\|T_m x- y_l\| \leq  \sum_{j=1}^{l-1} \frac{1}{l2^l} + \sum_{j=l+1}^J \frac{1}{2^j} + \frac{1}{2^l} + \frac{1}{2^l}<\frac{4}{2^l},
\]
so that $T_mx\in V$ for all $m\in B_l$, $m\geq N$. Since $B_l\cap [N,\infty)\in \mathcal{A}_\delta$, we have that condition (a) of Theorem \ref{t-BirAHC} is satisfied. It follows that $(T_n)$ admits an $\mathcal{A}$-universal point in $\widetilde{X}$, hence in $X$.
\end{proof}

We note that the proof also shows that the set of $\mathcal{A}$-universal points for $(T_n)$ is residual in $X$ provided that the set $X_0$, defined in \eqref{eq-x0}, is dense in $X$.

\section{Weighted shift operators and upper Furstenberg families}\label{s-weiupp}

Weighted shifts play an eminent role in linear dynamics. We will show in this section that the $\mathcal{A}$-Hypercyclicity Criterion lends itself easily to a characterization of the $\mathcal{A}$-hypercyclicity of weighted shifts. In some cases we obtain conditions that are much simpler than those previously established.

We adopt here the terminology of \cite[Section 4.1]{GrPe11}. In particular, an F-sequence space  is an F-space that is a subspace of the space $\mathbb{K}^{\mathbb{N}_0}$ of all (real or complex) sequences and such that each coordinate functional $x=(x_n)_{n\geq 0}\to x_k$, $k\geq 0$, is continuous. Fr\'echet sequence spaces are defined analogously. The canonical unit sequences are denoted by $e_n=(\delta_{n,k})_{k\geq 0}$. 

Let $w=(w_n)_{n\geq 1}$ be a weight sequence, that is, a sequence of non-zero scalars. The corresponding weighted backward shift $B_w$ is defined by $B_w((x_n)_{n\geq 0})=(w_{n+1}x_{n+1})_{n\geq 0}$. The closed graph theorem implies that whenever $B_w$ maps an F-sequence space $X$ into itself then $B_w$ is an operator on $X$. The unweighted shift (with $w_n=1$ for all $n\geq 0$) is denoted by $B$. It is by now well known that by conjugacy the dynamical properties of $B_w$ on $X$ coincide with those of $B$ on a related sequence space, see below.

Thus we start by studying the unweighted shift. As usual, $\|\cdot\|$ will denote an F-norm defining the topology of an F-space.

\begin{theorem}\label{t-bahc}
Let $X$ be an F-sequence space in which $(e_n)_{n\geq 0}$ is a basis. Suppose that the backward shift $B$ is an operator on $X$. Let $\mathcal{A}$ be a u.f.i.~upper Furstenberg family.

\emph{(a)} If, for any $p\geq 0$ and $\varepsilon>0$, there exists some $A\in\mathcal{A}$, $A\subset [p,\infty)$, such that
\begin{itemize}
\item[\rm (i)] 
\[
\sum_{n\in A} e_{n} \quad \text{converges in $X$};
\]
\item[\rm (ii)] for any $m\in A$,
\[
\Big\|\sum _{\substack{n\in A\\n>m}} e_{n-m+p}\Big\| <\varepsilon;
\]
\end{itemize}
then $B$ is $\mathcal{A}$-hypercyclic. 

\emph{(b)} If $X$ is a Fr\'echet sequence space in which $(e_n)_{n\geq 0}$ is an unconditional basis then the converse also holds.
\end{theorem}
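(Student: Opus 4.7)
For part (a), I would apply the $\mathcal{A}$-Hypercyclicity Criterion (Theorem \ref{t-ahc}) with $X_0 = Y_0 = \operatorname{span}\{e_n\}$ (dense because $(e_n)$ is a basis) and $S_n(\sum_p a_p e_p) := \sum_p a_p e_{n+p}$. For a test vector $y = \sum_{p=0}^P a_p e_p \in Y_0$ and tolerance $\varepsilon$, the plan is to invoke the hypothesis just once, at $(p,\varepsilon_0) = (P, \varepsilon_0)$ with $\varepsilon_0 > 0$ to be chosen small, and reuse the single set $A \in \mathcal{A}$, $A \subset [P, \infty)$, it produces for all basis components of $y$: the identity $\sum_{n \in A, n > m} e_{n-m+p} = B^{P-p}\bigl(\sum_{n \in A, n > m} e_{n-m+P}\bigr)$ together with continuity of $B^{P-p}$ upgrades condition (ii) at $p = P$ to an analogous bound $g_{P-p}(\varepsilon_0)$ at every $p \le P$, where $g_{P-p}(\cdot) \to 0$ at $0$. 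The return-time set for the criterion is $A^* := A - P \in \mathcal{A}_\delta$ by uniform left-invariance, and the candidate $z := \sum_{n \in A^*} S_n y = \sum_p a_p\, B^{P-p}\bigl(\sum_{n \in A} e_n\bigr)$ converges by (i) and continuity.

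Computing $B^m z$ for $m = n_0 - P \in A^*$ splits into three pieces: the $n = n_0$ contribution reproduces $y$ exactly; the forward tail over $n \in A$, $n > n_0$ is bounded by $\sum_p |a_p|\, g_{P-p}(\varepsilon_0)$; and the ``close'' contribution from $n \in A$ with $n_0 - P \le n < n_0$ is the main obstacle, since (ii) does not directly control it. My resolution is that $A$ is automatically $P$-sparse for $\varepsilon_0$ small enough: for any consecutive $m_1 < m_2$ in $A$, the vector $e_{m_2 - m_1 + P}$ appears with coefficient one in the sum of (ii) at $m = m_1$, and equicontinuity of the basis projections (Banach--Steinhaus applied to a Schauder basis in the F-space $X$) forces $\|e_{m_2 - m_1 + P}\|$ to be bounded by a continuity modulus of $\varepsilon_0$; pushing $\varepsilon_0$ below the strictly positive threshold $\min_{k \le 2P}\|e_k\|$ forces $m_2 - m_1 > P$, so the close-term set is empty. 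Criterion condition (i) is then immediate: for $x \in X_0$ supported on $[0, N]$ we have $B^n x = 0$ for $n > N$, and uniform finite invariance puts $A^* \setminus [0, N]$ in the same $\mathcal{A}_\delta$.

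For part (b), suppose $X$ is Fréchet with unconditional basis $(e_n)$ and let $x^*$ be an $\mathcal{A}$-hypercyclic vector. Given $(p, \varepsilon)$, the key idea is to use the composite test vector $e_0 + e_p$ rather than $e_p$ alone: for $\eta > 0$ small, set $A := \{n \ge 0 : \|B^n x^* - (e_0 + e_p)\| < \eta\} \cap [p, \infty)$, which lies in $\mathcal{A}$ by uniform finite invariance and satisfies $A \subset [p, \infty)$ by construction. Continuity of the coordinate functionals gives $|x^*_n - 1|, |x^*_{n+p} - 1| < 1/2$ for all $n \in A$, so both $x^*_n$ and $x^*_{n+p}$ are uniformly bounded away from zero. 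The sub-series $\sum_{n \in A} x^*_n e_n$ converges unconditionally (as a sub-series of $x^*$), and dividing by the bounded-below coefficients via the unconditional basis property yields convergence of $\sum_{n \in A} e_n$, establishing (i). For (ii), the index set $S_m := \{n - m + p : n \in A, n > m\}$ lies in $(p, \infty)$ and thus avoids both $0$ and $p$, so the unconditional projection $P_{S_m}\bigl(B^m x^* - (e_0 + e_p)\bigr) = \sum_{n \in A, n > m} x^*_{n+p}\, e_{n-m+p}$ has F-norm at most an unconditional basis constant times $\eta$; rescaling by the bounded factors $(x^*_{n+p})^{-1}$ via the unconditional basis once more, the un-weighted sum $\sum_{n \in A, n > m} e_{n-m+p}$ has norm $\lesssim \eta$, which is $< \varepsilon$ for $\eta$ initially small. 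The main obstacle in (b) is to avoid requiring any right-translation invariance of $\mathcal{A}$ (which is not part of the upper Furstenberg axioms), and the choice of the composite target $e_0 + e_p$ is precisely what circumvents this: both the convergence (i) and the tail bound (ii) are extracted directly from the same return-time set, with no shift of $A$ required.
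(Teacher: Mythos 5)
Your proposal is correct and follows essentially the same route as the paper's proof: in (a) you apply the $\mathcal{A}$-Hypercyclicity Criterion with $S_n=F^n$ on the finite sequences, force gaps larger than $p$ in $A$ from condition (ii) via continuity of the coordinate functionals, translate $A$ by $p$ using uniform left-invariance, and settle criterion condition (i) by u.f.i.; in (b) you extract both (i) and (ii) from a single return-time set using the bounded-multiplier property of the unconditional basis, exactly as in the paper. The only (harmless) deviations are your target $e_0+e_p$ in place of the paper's $\sum_{j=0}^p e_j$, and the tail bound $\sum_p |a_p|\,g_{P-p}(\varepsilon_0)$, which should be replaced by a joint continuity estimate (as in the paper's choice of $\eta$) since the F-norm is not homogeneous.
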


We note that, by continuity of $B$ and the fact that $m\geq p$ for all $m\in A$, the convergence of the series in (i) implies the convergence of the series in (ii).

\begin{proof} (a). We first show that we may assume $A$ to have arbitrarily large gaps: fixing an arbitrary $N\geq 1$ one may have that
\[
\text{for any $n,m\in A$ with $n>m$}, (n-m) > N.
\]
Indeed, by the continuity of the coordinate functionals there is some $\varepsilon_N>0$ such that $|x_0|,\ldots,|x_{N+p}|<1$ whenever $\|x\|<\varepsilon_N$. Choosing then $\varepsilon=\varepsilon_N>0$, (ii) implies that $n-m+p>N+p$ for any $n,m\in A$, $n>m$. 

We now let $S_n=F^n$, $n\geq 0$, where $F$ denotes the forward shift that maps $(x_0,x_1,\ldots)$ to $(0,x_0,x_1,\ldots)$. We choose $X_0=Y_0$ as the set of finite sequences, which is dense by assumption. It suffices then to show that the conditions of Theorem \ref{t-ahc} hold for $T_n=B^n$, $n\geq 0$. 

To this end, let 
\[
y=(y_0,\ldots,y_p,0,\ldots)\in Y_0
\]
with $p\geq 0$, and let $\varepsilon>0$. There is some $\eta>0$ such that if $x^{(0)},x^{(1)},\ldots,x^{(p)}\in X$ have norm less than $\eta$ then 
\begin{equation}\label{eq-eps}
\Big\|\sum_{j=0}^p y_j B^{p-j}x^{(j)}\Big\|<\varepsilon.
\end{equation}
There then exists a set $\widetilde{A}\in\mathcal{A}$, $\widetilde{A}\subset [p,\infty)$ such that assumptions (i) and (ii) hold for $\varepsilon$ replaced by $\eta$. By our argument above we can assume that $\widetilde{m}-\widetilde{n}>p$ whenever $\widetilde{m},\widetilde{n}\in \widetilde{A}$, $\widetilde{m}>\widetilde{n}$. We set 
\[
A=\widetilde{A}-p,
\]
and the left-translation invariance of $\mathcal{A}$ assures us that $A\in\mathcal{A}$. We can now verify the conditions of the $\mathcal{A}$-Hypercyclicity Criterion.  

First, since $B^nx\to 0$ as $n\to\infty$ for all $x\in X_0$ and since $\mathcal{A}$ is u.f.i., condition (i) of Theorem \ref{t-ahc} holds.

Next, by assumption (i) we have that
\[
\sum_{n\in A} e_{n+p} = \sum_{\widetilde{n}\in \widetilde{A}} e_{\widetilde{n}}
\]
converges. Consequently,  
\[
\sum _{n\in A} S_n y = \sum _{n\in A} \sum_{j=0}^p y_j e_{j+n} =
\sum_{j=0}^p y_j B^{p-j}\sum _{n\in A} e_{n+p} 
\]
converges. Thus condition (ii) of Theorem \ref{t-ahc} holds.

Finally, we have for $m\in A$, 
\[
B^{m} \sum _{n\in A} S_n y -y = \sum _{\substack{n\in A\\n<m}} B^{m-n} y + B^mF^my + \sum _{\substack{n\in A\\n>m}} F^{n-m}y-y=\sum _{\substack{n\in A\\n>m}} F^{n-m}y,
\]
where we have used that $m-n>p$ whenever $m,n\in A$, $n<m$, and the fact that $F$ is the right inverse of $B$. Now, writing $m=\widetilde{m}-p$ with $\widetilde{m}\in\widetilde{A}$,
\[
\sum _{\substack{n\in A\\n>m}} F^{n-m} y = \sum _{\substack{\widetilde{n}\in \widetilde{A}\\\widetilde{n}>\widetilde{m}}} F^{\widetilde{n}-\widetilde{m}} y = \sum _{\substack{\widetilde{n}\in \widetilde{A}\\\widetilde{n}>\widetilde{m}}} \sum_{j=0}^p y_j e_{j+\widetilde{n}-\widetilde{m}} = 
\sum_{j=0}^p y_j B^{p-j}\sum _{\substack{\widetilde{n}\in \widetilde{A}\\\widetilde{n}>\widetilde{m}}}  e_{\widetilde{n}-\widetilde{m}+p}, 
\]
so that \eqref{eq-eps} and assumption (ii) for $\widetilde{A}$ and $\eta$ imply condition (iii) of Theorem \ref{t-ahc}. This completes the proof of (a).

(b) Now suppose that $X$ is a Fr\'echet sequence space and that $(e_n)$ is an unconditional basis in $X$. We will need the following property of such a basis, see \cite[3.3.9]{KaGu81} with \cite[Theorem 2.17]{Rud73}:
\begin{itemize}
\item[(M)] If $(x_n)\in X$ and $(a_n)$ is a bounded sequence of scalars then $(a_nx_n)\in X$. Moreover, for any $\varepsilon>0$ there is some $\delta>0$ such that, if $\|x\|\leq \delta$ and $\sup_n|a_n|\leq 2$, then  $\|(a_nx_n)\|<
\varepsilon$. 
\end{itemize}
In addition, the continuity of the coordinate functionals implies the following:
\begin{itemize}
\item[(F)] For any $p\geq 0$ there is some $\alpha_p>0$ such that, whenever $x=(x_n)\in X$ and $\|x\|<\alpha_p$, then $|x_j|\leq \frac{1}{2}$ for $j=0,\ldots,p$. 
\end{itemize}
Now let $x\in X$ be an $\mathcal{A}$-hypercyclic vector for $B$. Fix $p\geq 0$ and $\varepsilon>0$. Let $\delta>0$ and $\alpha_p$ be numbers given by (M) and (F), and set
\[
y=\sum_{j=0}^p e_j.
\]
Then, by $\mathcal{A}$-hypercyclicity of $x$, there is some set $A\in\mathcal{A}$ such that, for all $n\in A$, 
\[
\|B^nx-y\|<\min (\delta, \alpha_p).
\]
We can assume by finite invariance that $A\subset [p,\infty)$. Now, (F) implies that, for any $n\in A$ and $j=0,\ldots,p$,
\[
|x_{n+j}-1|\leq \frac{1}{2},
\]
hence 
\begin{equation}\label{eq-Bhyp2}
\Big|\frac{1}{x_{n+j}}\Big|\leq 2. 
\end{equation}

On the one hand, by \eqref{eq-Bhyp2} with $j=0$ and property (M), the fact that $x\in X$ implies that
\[
\sum_{n\in A} \frac{1}{x_n}x_n e_n
\]
converges in $X$. This shows condition (i). 

On the other hand, since $B^mx-y\in X$ with $\|B^mx-y\|<\delta$ for all $m\in A$, \eqref{eq-Bhyp2} with $j=p$ and property (M) imply that, for all $m\in A$,
\[
\sum_{\substack{n\in A\\n>m}} \frac{1}{x_{n+p}}[B^mx-y]_{n-m+p} e_{n-m+p} 
\]
converges in $X$ and has F-norm less than $\varepsilon$; here, $[z]_k$ denotes the $k$th entry of the sequence $z$. But since $n-m+p>p$ we have that
\[
[B^mx-y]_{n-m+p}=x_{n+p}.
\]
This shows that, for all $m\in A$,
\[
\sum_{\substack{n\in A\\n>m}} e_{n-m+p} 
\]
converges in $X$ and has F-norm less than $\varepsilon$, which shows condition (ii).
\end{proof}

\begin{remark}\label{r-equiv}
Since $B^{p-j}e_{n-m+p}=e_{n-m+j}$ we see from the continuity of $B$ that one may replace condition (ii) equivalently by the stronger condition
\begin{itemize}
\item[\rm (ii')] for any $m\in A$ and any $0\leq j\leq p$
\[
\Big\|\sum _{\substack{n\in A\\n>m}} e_{n-m+j}\Big\| <\varepsilon.
\]
\end{itemize}
\end{remark}

One can transfer the result from the unweighted shift $B$ to arbitrary weighted shift operators $B_w$ via conjugacy, see \cite{MaPe02}. Indeed, let $B_w$ be an operator on an F-sequence space $X$. Defining $v_n=(\prod_{\nu=1}^n w_\nu)^{-1}$, $n\geq 0$, and $X(v)=\{(x_n) : (x_nv_n)\in X\}$ one easily sees that $B_w$ is conjugate to $B$ via the bijection $\phi_v: X(v)\to X$, $(x_n)\to (x_nv_n)$; see \cite[Section 4.1]{GrPe11} for details. Now, $\mathcal{A}$-hypercyclicity is preserved under conjugacies, which follows directly from the fact that for any non-empty open subset $V$ of $X(v)$ and $x\in X$, $\{ n\geq 0 : B_w^n\phi_v(x) \in V\}= \{ n\geq 0 : B^n(x) \in \phi_v^{-1}(V)\}$; compare also with \cite[Proposition 1.19]{GrPe11}.

In this way we obtain the following.

\begin{theorem}\label{t-bwahc}
Let $X$ be an F-sequence space in which $(e_n)_{n\geq 0}$ is a basis. Suppose that the weighted backward shift $B_w$ is an operator on $X$. Let $\mathcal{A}$ be a u.f.i.~upper Furstenberg family.

\emph{(a)} If, for any $p\geq 0$ and $\varepsilon>0$, there exists some $A\in\mathcal{A}$, $A\subset [p,\infty)$, such that
\begin{itemize}
\item[\rm (i)] 
\[
\sum_{n\in A} \frac{1}{\prod_{\nu=1}^n w_\nu}e_{n} \quad \text{converges in $X$};
\]
\item[\rm (ii)] for any $m\in A$,
\[
\Big\|\sum _{\substack{n\in A\\n>m}} \frac{1}{\prod_{\nu=1}^{n-m+p} w_\nu}e_{n-m+p}\Big\| <\varepsilon;
\]
\end{itemize}
then $B_w$ is $\mathcal{A}$-hypercyclic.

\emph{(b)} If $X$ is a Fr\'echet sequence space in which $(e_n)_{n\geq 0}$ is an unconditional basis then the converse also holds.
\end{theorem}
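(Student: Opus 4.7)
The plan is to deduce the theorem from Theorem~\ref{t-bahc} by means of the conjugacy already sketched in the paragraph preceding the statement. The proof is essentially bookkeeping once the dictionary between $B_w$ on $X$ and $B$ on $X(v)$ is pinned down.

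First, I would set $v_n = (\prod_{\nu=1}^n w_\nu)^{-1}$ (with $v_0 := 1$), let $\phi_v : X(v) \to X$ be the bijection $(x_n) \mapsto (x_n v_n)$, and equip $X(v)$ with the F-norm $\|x\|_{X(v)} := \|\phi_v(x)\|_X$. By construction $\phi_v$ is an isometric linear isomorphism, so $X(v)$ is an F-space (a Fr\'echet space if $X$ is). Since every $v_n$ is nonzero, continuity of the coordinate functionals on $X$ transfers to $X(v)$, making $X(v)$ an F-sequence space. Because $\phi_v(e_n) = v_n e_n$ is a nonzero scalar multiple of $e_n$ in $X$, the sequence $(e_n)_{n \geq 0}$ is a (Schauder) basis in $X(v)$ whenever it is one in $X$; the same argument shows unconditionality is preserved.

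A direct computation, using $w_{n+1} v_{n+1} = v_n$, gives $B_w \circ \phi_v = \phi_v \circ B$, so $\phi_v$ is a topological conjugacy between $B$ on $X(v)$ and $B_w$ on $X$. As observed in the discussion preceding the theorem, $\mathcal{A}$-hypercyclicity is preserved by conjugacy, whence $B_w$ is $\mathcal{A}$-hypercyclic on $X$ if and only if $B$ is $\mathcal{A}$-hypercyclic on $X(v)$.

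Finally, I would translate the two conditions. Since $\phi_v$ is an isometry between F-norms and sends $\sum_{n \in A} e_n$ to $\sum_{n \in A} v_n e_n = \sum_{n \in A} \frac{1}{\prod_{\nu=1}^n w_\nu} e_n$, condition (i) of Theorem~\ref{t-bahc} for $B$ on $X(v)$ is literally condition (i) of the present theorem for $B_w$ on $X$. The analogous identification applied to the tail series $\sum_{n \in A,\, n > m} e_{n-m+p}$ converts condition (ii) of Theorem~\ref{t-bahc} into condition (ii) of the present theorem. Applying Theorem~\ref{t-bahc}(a) on $X(v)$ then yields part (a); invoking the converse direction of Theorem~\ref{t-bahc}(b) on $X(v)$, whose hypotheses are available precisely because $X(v)$ inherits the Fr\'echet/unconditional-basis structure from $X$, yields part (b). No genuine obstacle arises here; the only care needed is to verify that the structural assumptions really transfer through $\phi_v$, which they do because $\phi_v$ is diagonal with nonzero scalar entries.
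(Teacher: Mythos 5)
Your proposal is correct and follows exactly the paper's route: the paper obtains Theorem~\ref{t-bwahc} from Theorem~\ref{t-bahc} via precisely the conjugacy $\phi_v:X(v)\to X$ described in the paragraph preceding the statement, and your write-up merely makes explicit the (correct) bookkeeping that the basis, unconditionality, Fr\'echet structure and the two series conditions transfer through the diagonal isomorphism $\phi_v$.
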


Since our focus in the next section will be on shift operators on the space $c_0=c_0(\mathbb{N}_0)$ of null sequences, we state this special case explicitly.

\begin{corollary}\label{c-bwahc}
Let $w=(w_n)_{n\geq 1}$ be a bounded sequence of non-zero scalars and $\mathcal{A}$ a u.f.i.~upper Furstenberg family. Then the weighted backward shift $B_w$ is $\mathcal{A}$-hypercyclic on $c_0$ if and only if, for any $p\geq 0$ and $M>0$, there exists some $A\in\mathcal{A}$, $A\subset [p,\infty)$, such that
\begin{itemize}
\item[\rm (i)] 
\[
|w_1 w_2\cdots w_n| \to\infty\quad\text{as $n\to\infty$, $n\in A$};
\]
\item[\rm (ii)] for any $n,m\in A$, $n>m$,
\[
|w_1 w_2\cdots w_{n-m+p}| > M.
\]
\end{itemize}
\end{corollary}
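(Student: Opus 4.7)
The plan is to deduce the corollary as a direct specialization of Theorem \ref{t-bwahc} to $X = c_0$, since $c_0$ is a separable Banach space in which $(e_n)_{n\geq 0}$ is an unconditional Schauder basis. The boundedness of $w$ ensures that $B_w$ is indeed a bounded operator on $c_0$ (with $\|B_w\| \leq \sup_n |w_n|$), so both the sufficient direction (a) and the necessary direction (b) of Theorem \ref{t-bwahc} are available.

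The entire content of the proof is to translate the two abstract conditions of Theorem \ref{t-bwahc} into the concrete form stated in the corollary, using the key identity
\[
\Big\|\sum_{k} a_k e_k\Big\|_{c_0} = \sup_k |a_k|.
\]
First I would translate condition (i). The series $\sum_{n\in A} \frac{1}{\prod_{\nu=1}^n w_\nu} e_n$ converges in $c_0$ if and only if its coefficients tend to $0$ along $A$, i.e.\ $\frac{1}{|\prod_{\nu=1}^n w_\nu|} \to 0$ as $n\to\infty$ in $A$, which is exactly condition (i) of the corollary. Next I would translate condition (ii): for fixed $m\in A$, and noting that the indices $n-m+p$ are pairwise distinct as $n$ ranges over $A\cap(m,\infty)$, we have
\[
\Big\|\sum_{\substack{n\in A\\ n>m}} \frac{1}{\prod_{\nu=1}^{n-m+p} w_\nu}e_{n-m+p}\Big\|_{c_0} = \sup_{\substack{n\in A\\ n>m}} \frac{1}{|w_1 w_2 \cdots w_{n-m+p}|}.
\]
Requiring this to be less than $\varepsilon$ for every $m\in A$ is exactly the condition that $|w_1 \cdots w_{n-m+p}| > 1/\varepsilon$ for all $n,m\in A$ with $n>m$. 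Thus, setting $M = 1/\varepsilon$ (and vice versa), condition (ii) of Theorem \ref{t-bwahc} is equivalent to condition (ii) of the corollary.

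There is essentially no obstacle: the argument is a bookkeeping translation, and I would only be slightly careful about two routine points, namely that $n - m + p \geq 1$ (guaranteed by $A\subset[p,\infty)$ and $n>m$) so that the indices $e_{n-m+p}$ refer to valid basis vectors, and that a single $A$ works for condition (i) and all relevant $m$ simultaneously in condition (ii), which matches the quantifier structure on both sides. With these observations, the biconditional follows at once from Theorem \ref{t-bwahc}.
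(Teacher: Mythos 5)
Your proposal is correct and matches the paper's treatment: the paper offers no separate proof of Corollary \ref{c-bwahc} but presents it as the direct specialization of Theorem \ref{t-bwahc} to $X=c_0$, where convergence of $\sum_{n\in A}\frac{1}{\prod_{\nu=1}^{n}w_\nu}e_n$ amounts to the coefficients tending to $0$ and the F-norm in condition (ii) is the supremum of the (distinct-index) coefficients, giving exactly the stated reformulation with $M=1/\varepsilon$. Your two "routine points" (validity of the indices $n-m+p$ and the shared quantifier structure for a single $A$) are exactly the right things to check, so nothing is missing.
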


\begin{remark}\label{r-compChar}
Let us discuss these results in the light of what is known in the literature. Upper frequently hypercyclic weighted shifts on $c_0$ were recently characterized by Bayart and Ruzsa \cite[Theorem 6]{BaRu15}. Our characterization -- take for $\mathcal{A}$ the family of sets of positive upper density -- is substantially simpler. While Bayart and Ruzsa need to work with an interacting sequence $(A_p)$ of sets, our condition only involves a single set $A$.

The same comment applies to Theorem 6 of B\`es, Menet, Peris and Puig \cite{BMPP15}. They characterize $\mathcal{A}$-hypercyclicity of weighted shifts on $c_0$ and $\ell_p$ $(1\leq p<\infty)$ for Furstenberg families $\mathcal{A}$ under very minor restrictions on $\mathcal{A}$. Again, their conditions involve sets $A_p\in\mathcal{A}$ that interact. Thus, in the special case of u.f.i.~upper Furstenberg families, our conditions are considerably weaker. This applies, for example, for reiterative hypercyclicity, as studied by these authors.

On the other hand, when we consider Furstenberg families that are not upper, or when we pass from sequence spaces over $\mathbb{N}_0$ to sequence spaces over $\mathbb{Z}$, our $\mathcal{A}$-Hypercyclicity Criterion fails to deliver. In contrast, Bayart and Ruzsa have a characterization of frequently hypercyclic and upper frequently hypercyclic weighted shifts on $c_0(\mathbb{Z})$ and of frequently hypercyclic weighted shifts on $c_0$ (see \cite[Theorems 12, 13]{BaRu15}), while B\`es et al. characterize $\mathcal{A}$-hypercyclicity of weighted shifts on $c_0$ and $\ell_p$ also for not necessarily upper Furstenberg families (see \cite[Theorem 6]{BMPP15}). 

The dichotomy observed here reflects the difference between the two $\mathcal{A}$-Hypercyclicity Criteria that we discussed earlier, see Remark \ref{r-compAHC}.
\end{remark}

\section{Weighted shift operators and arbitrary Furstenberg families}\label{s-weiarb}

The last remark notwithstanding, we will in this section combine ideas from the proof of Theorem \ref{t-bahc} above with ideas of Bayart and Ruzsa \cite{BaRu15} to obtain a characterization of the $\mathcal{A}$-hypercyclicity of weighted shift operators for arbitrary Furstenberg families on a large class of sequence spaces. This will generalize \cite[Theorem 13]{BaRu15} and \cite[Theorem 6]{BMPP15}. The proof cannot use Baire category arguments and is therefore constructive. 

We begin again with the unweighted shift. In the following, if $X$ is a Fr\'echet sequence space then we let $(\|\cdot\|_n)_{n\geq 1}$ denote an increasing sequence of seminorms defining its topology.

\begin{theorem}\label{t-bahcgen}
Let $X$ be a Fr\'echet sequence space in which $(e_n)_{n\geq 0}$ is a basis. Suppose that the backward shift $B$ is an operator on $X$. Let $\mathcal{A}$ be an f.i.~Furstenberg family. 

\emph{(a)} If there exists a sequence $(\varepsilon_p)_{p\geq 1}$ of positive numbers with $\varepsilon_p\to 0$ as $p\to\infty$ and a sequence $(A_p)_{p\geq 1}$ of pairwise disjoint sets in $\mathcal{A}$ such that
\begin{itemize}
\item[\rm (i)] for any $p\geq 1$,
\[
\sum_{n\in A_p} e_{n+p} \quad \text{converges in $X$};
\]
\item[\rm (ii)] for any $p,q\geq 1$, any $m\in A_q$, and any $j=0,\ldots,p$,
\[
\Big\|\sum _{\substack{n\in A_p\\n>m}} e_{n-m+j}\Big\|_q <\min(\varepsilon_p,\varepsilon_q);
\]
\end{itemize}
then $B$ is $\mathcal{A}$-hypercyclic.

\emph{(b)} If $(e_n)_{n\geq 0}$ is an unconditional basis then the converse also holds.

Moreover, one may replace `there exists a sequence $(\varepsilon_p)_{p\geq 1}$' equivalently by `for any sequence $(\varepsilon_p)_{p\geq 1}$.'
\end{theorem}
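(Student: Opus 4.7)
The proof splits into the sufficiency (a), the converse (b), and the ``moreover'' clause. No Baire-category argument is available here, since for an arbitrary Furstenberg family the set of $\mathcal{A}$-hypercyclic vectors may well be meagre, so the sufficiency requires an explicit construction, in the style of Bayart--Ruzsa \cite{BaRu15}.

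For the sufficiency in (a), I would first fix a countable dense sequence $(z_k)_{k\geq 1}$ of finitely supported vectors with $\operatorname{supp} z_k\subset[0,s_k]$, and choose inductively a strictly increasing sequence of indices $p_1<p_2<\dots$ with $p_k\geq s_k$ such that $\varepsilon_{p_k}$ is so small that several summability and separation constraints, spelled out below, hold for all $k'\leq k$. This is possible because $\varepsilon_p\to 0$, combined with the ``moreover'' clause (proved separately), which lets us shrink $(\varepsilon_p)$ as needed. Using (i) together with the continuity of $B^{p_k-j}$, each series $\sum_{n\in A_{p_k}}F^n z_k=\sum_{j=0}^{s_k}(z_k)_j\sum_{n\in A_{p_k}}e_{n+j}$ converges in $X$, and the candidate vector
\[
x:=\sum_{k\geq 1}\sum_{n\in A_{p_k}}F^n z_k
\]
is then well defined by the summability constraints on $\varepsilon_{p_k}$. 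To verify $\mathcal{A}$-hypercyclicity, I would fix an open set $U\ni z_k$ and, for $m\in A_{p_k}$, decompose
\[
B^m x - z_k = \sum_{\substack{n'\in A_{p_k}\\n'>m}}F^{n'-m}z_k + \sum_{\substack{n'\in A_{p_k}\\n'<m}}B^{m-n'}z_k + \sum_{k'\neq k}\,\sum_{n'\in A_{p_{k'}}}B^m F^{n'}z_{k'},
\]
where the $n'=m$ contribution from $k'=k$ has already cancelled $z_k$. The ``$n'>m$'' tails are bounded in any seminorm $\|\cdot\|_q$ by condition (ii), applied with $m\in A_{p_k}$ and $p\in\{p_k,p_{k'}\}$, giving an estimate of order $\varepsilon_{p_{k'}}$ that sums over $k'$ by the rapid decay of $(\varepsilon_{p_k})$. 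The ``$n'<m$'' terms are handled by extracting from (ii), via the continuity estimate $|x_r|\leq C_{r,q}\|x\|_q$, the sparsity statement that no element of $A_{p_k}$ lies in $(n',n'+s_{k'}]$ for any $n'\in A_{p_{k'}}$ and, swapping the roles of $p$ and $q$, symmetrically; this forces each such term to be zero for $m$ sufficiently large. Therefore $\{m\in A_{p_k}:B^m x\in U\}$ contains $A_{p_k}\setminus F$ for some finite $F$, and by finite invariance of $\mathcal{A}$ this set lies in $\mathcal{A}$.

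For the converse in (b), I would take an $\mathcal{A}$-hypercyclic vector $x$, set $y_p:=2\sum_{j=0}^{p}e_j$, and let $A_p:=\{n\geq 0:\|B^n x-y_p\|<\delta_p\}$ for radii $\delta_p$ chosen so small that the balls of radius $\delta_p$ around $y_p$ are pairwise disjoint, and also so small that (by the continuity of the coordinate functionals) $1\leq|x_{n+j}|\leq 3$ for every $n\in A_p$ and every $j=0,\dots,p$. Property (M) of an unconditional basis, exactly as in the proof of Theorem \ref{t-bahc}(b), lets us replace the coefficient $1$ by $1/x_{n+j}$ and read off (i) and (ii) with $\varepsilon_p$ proportional to $\delta_p$. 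The ``moreover'' clause is then a pure reparametrization: given any prescribed $(\varepsilon'_p)\to 0$, choose an injective map $\varphi:\{1,2,\dots\}\to\{1,2,\dots\}$ with $\varphi(p)\geq p$ and $\varepsilon_{\varphi(p)}\leq\varepsilon'_p$, which exists because $\varepsilon_p\to 0$, and put $A'_p:=A_{\varphi(p)}$; condition (i) passes via the continuity of $B^{\varphi(p)-p}$, and condition (ii) passes using the monotonicity of the seminorms $\|\cdot\|_q\leq\|\cdot\|_{\varphi(q)}$ together with $\min(\varepsilon_{\varphi(p)},\varepsilon_{\varphi(q)})\leq\min(\varepsilon'_p,\varepsilon'_q)$.

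The most delicate point is the inductive construction in (a): one must simultaneously control summability of $x$ in every seminorm, the internal sparsity of each $A_{p_k}$, and the mutual separation of $A_{p_k}$ from each previously chosen $A_{p_{k'}}$. Condition (ii) only provides one-sided bounds on the tail $n'>m$, so backward separation must be deduced by swapping the roles of $p$ and $q$, and the absence of unconditionality forces all sparsity estimates to be read off coordinate by coordinate through the functionals $|x_r|\leq C_{r,q}\|x\|_q$ rather than by the more convenient comparison of subseries.
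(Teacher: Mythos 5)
Your treatment of part (a) and of the ``moreover'' clause follows essentially the same route as the paper: an explicit series $x=\sum_k\sum_{n\in A_{p_k}}F^nz_k$, cancellation of the backward terms via a sparsity property extracted from condition (ii) with $j=0$ through the coordinate functionals, tail estimates from (ii), and a final appeal to finite invariance; the reparametrization argument for the ``moreover'' clause is also the paper's (pass to a subsequence and use continuity of $B$ and monotonicity of the seminorms). Those parts are fine, modulo the summability bookkeeping you defer.

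Part (b), however, has a genuine gap. With the constant-coefficient targets $y_p=2\sum_{j=0}^p e_j$ you can indeed conclude $1\leq|x_{n+j}|\leq 3$ for $n\in A_p$, $j=0,\dots,p$, and condition (i) follows from property (M). But to deduce condition (ii) you must write $e_{n-m+j}=\frac{1}{x_{n+j}}\,[B^mx-y_q]_{n-m+j}\,e_{n-m+j}$, and this identity requires $[B^mx-y_q]_{n-m+j}=x_{n+j}$, i.e.\ $n-m+j>q$, so that the coordinate $n-m+j$ lies outside the support of $y_q$. Nothing in your construction rules out $n\in A_p$, $m\in A_q$ with $0<n-m+j\leq q$: if the hypercyclic vector $x$ has a long block of coordinates close to $2$, the set $A_p$ will contain elements at distance $\leq p$ from one another, the windows $[n,n+p]$ and $[m,m+q]$ overlap consistently (both see values near $2$), and then $\sum_{n>m}e_{n-m+j}$ contains a unit vector $e_r$ with $r\leq q$, whose $\|\cdot\|_q$-seminorm need not be small -- so (ii) simply fails for the sets you produce. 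Note that you cannot repair this by thinning $A_p$, since an arbitrary f.i.\ Furstenberg family only tolerates removal of finite sets. This is exactly why the paper takes the staggered targets $y^{(p)}=\sum_{j=0}^p\bigl(1+(j+1)\tfrac{C_p}{\varepsilon_p}\bigr)e_j$ with the separation condition \eqref{eq-dpf}: approximate equality $x_{n+j}=x_{m+k}$ then forces $p=q$ and $j=k$, hence $n=m$, so coincidences $n+j=m+k$ are impossible and $n-m+j>q$ is guaranteed. The reference to ``exactly as in the proof of Theorem \ref{t-bahc}(b)'' does not transfer, because there condition (ii) only involves the top index $j=p$ and $n-m+p>p$ holds automatically; here all $j=0,\dots,p$ and all seminorms $\|\cdot\|_q$ occur, which is where the difficulty lives.
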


\begin{proof} The final assertion of the theorem follows simply by passing to a subsequence of  $(A_p)_{p\geq 1}$ and noting the fact that $B$ is a continuous operator on $X$. Incidentally, the continuity of $B$ also implies the convergence of the series in (ii) under assumption (i).

(a) We first note that we have the following variant of property (F) in Theorem \ref{t-bahc}:  
\begin{itemize}
	\item [(F')] For any $p\geq 1$ there are a positive integer $n_p$ and a positive number $\alpha_p$ such that, whenever $x=(x_n)\in X$ and $\|x\|_{n_p}<\alpha_p$, then $|x_j|<1$ for $j=0,\ldots,p$. 
\end{itemize}
This implies that we may assume in addition to (i) and (ii) that
\begin{itemize}
\item[\rm (iii)] for any $p,q\geq 1$, $n\in A_p$, $m\in A_q$ with $n > m$, $n-m>q$.
\end{itemize}
Indeed, by passing again to a subsequence if necessary, we see by (ii), taking $j=0$, that we may even have that
for any $p,q\geq 1$ and for any $m\in A_q$
\[
\Big\|\sum _{\substack{n\in A_p\\n>m}} e_{n-m}\Big\|_{n_q} <\alpha_q,
\]
which by (F') implies (iii).

Now, by continuity of $B$ there are positive integers $k_p$ and positive numbers $\beta_p$, $p\geq 1$, such that if $\|x\|_{k_p}<\beta_p$ then
\begin{equation}\label{eq-bk}
\|B^{k} x\|_p <\frac{1}{p(p+1)2^p},\quad k=0,\ldots,p.
\end{equation}
Let $(A_p)_{p\geq 1}$ be a sequence in $\mathcal{A}$ that satisfies (i), (ii) and (iii) for the sequence
\begin{equation}\label{eq-epsk}
\varepsilon_p=\min\Big(\beta_p, \frac{1}{p(p+1)4^p}\Big),\quad p\geq 1.
\end{equation}
In view of (i) and the finite invariance of $\mathcal{A}$ we may assume that
\begin{itemize}
\item[\rm (i')] 
for any $p\geq 1$,
\[
\Big\|\sum_{n\in A_p} e_{n+p}\Big\|_{k_p} < \beta_p.
\]
\end{itemize}
In order to define an $\mathcal{A}$-hypercyclic vector, we fix a dense sequence $(y^{(p)})_{p\geq 1}$ of elements in $X$, where every $y^{(p)}$ is a finite sequence of the form
\[
y^{(p)} = \sum_{j=0}^p y^{(p)}_je_j,\quad \max_{0\leq j\leq p}|y_j^{(p)}|\leq p.
\]
We then define 
\[
x=\sum_{p=1}^\infty \sum_{n\in A_p} \sum_{j=0}^p y^{(p)}_j e_{n+j}.
\]
By (i') and \eqref{eq-bk} we have for $q\geq 1$ and $p\geq q$ that 
\begin{align*}
\Big\|\sum_{n\in A_p}\sum_{j=0}^p y^{(p)}_j e_{n+j}\Big\|_q&\leq \Big\|\sum_{n\in A_p}\sum_{j=0}^p y^{(p)}_j e_{n+j}\Big\|_p = \Big\|\sum_{j=0}^p y^{(p)}_j B^{p-j}\sum_{n\in A_p} e_{n+p}\Big\|_p\\
&\leq  \sum_{j=0}^p |y^{(p)}_j| \Big\|B^{p-j}\sum_{n\in A_p} e_{n+p}\Big\|_p\leq \frac{1}{2^p}.
\end{align*}
This shows that $x$ is a well-defined element of $X$.

It is now easy to see that $x$ is $\mathcal{A}$-hypercyclic for $B$. Indeed, for any $m\in A_q$,  $q\geq 1$, we have that
\[
B^mx-y^{(q)} = \sum_{p=1}^\infty \sum_{\substack{n\in A_p\\n\geq m}} \sum_{j=0}^p y^{(p)}_j e_{n-m+j}-\sum_{j=0}^q y^{(q)}_je_j= \sum_{p=1}^\infty \sum_{\substack{n\in A_p\\n> m}} \sum_{j=0}^p y^{(p)}_j e_{n-m+j}; 
\]
note that the terms $n< m$ disappear by (iii) because then $n-m+j<0$ if $j\leq p$. Moreover, (ii) and \eqref{eq-epsk} imply that for $j=0,\ldots,p$, if $p<q$ then
\[
 \Big\|\sum_{\substack{n\in A_p\\n> m}}  e_{n-m+j}\Big\|_q < \frac{1}{q(q+1)4^q}\leq \frac{1}{p(p+1)2^p2^q}, 
\]
while if $p\geq q$ then 
\[
 \Big\|\sum_{\substack{n\in A_p\\n> m}}  e_{n-m+j}\Big\|_q < \frac{1}{p(p+1)4^p}\leq\frac{1}{p(p+1)2^p2^q}.
\]
Putting everything together we obtain that, for any $m\in A_q$, $q\geq 1$,
\[
\|B^mx-y^{(q)}\|_q \leq  \sum_{p=1}^\infty \sum_{j=0}^p |y^{(p)}_j|\Big\|\sum_{\substack{n\in A_p\\n> m}}  e_{n-m+j}\Big\|_q\leq \sum_{p=1}^\infty\frac{1}{2^p2^q} = \frac{1}{2^q}.
\]
This confirms that $x$ is $\mathcal{A}$-hypercyclic for $B$.

(b) The proof is very similar to that of Theorem \ref{t-bahc}. We will need property (F') above and the following variant of (M) that can be proved in the same way:
\begin{itemize}
	\item[(M')] If $(x_n)\in X$ and $(a_n)$ is a bounded sequence of scalars then $(a_nx_n)\in X$. Moreover, for any $p\geq 1$ there are a positive integer $m_p$ and a positive number $C_p$ and such that, for any $x=(x_n)\in X$ and $(a_n)\in\ell_\infty$, $\|(a_nx_n)\|_p\leq  C_p \sup_n|a_n|\, \|x\|_{m_p}$. 
\end{itemize}
Now let $x\in X$ be an $\mathcal{A}$-hypercyclic vector for $B$, and let $(\varepsilon_p)_{p\geq 1}$ be a decreasing sequence of positive numbers that is bounded by 1. For $p\geq 1$, let $m_p$, $C_p\geq 1$, $n_p$, and $\alpha_p$ be given by properties (M') and (F'). We may assume that the numbers $C_p$ satisfy, in addition, that, if $q=p\geq 1$ with $0\leq j < k\leq p$ or if $q>p\geq 1$ with $0\leq j\leq p$ and $0\leq k\leq q$, then
\begin{equation}\label{eq-dpf}
(k+1)\frac{C_q}{\varepsilon_q}-(j+1)\frac{C_p}{\varepsilon_p}\geq 2.
\end{equation}

Let
\[
y^{(p)} = \sum_{j=0}^p\Big(1+(j+1)\frac{C_p}{\varepsilon_p}\Big) e_j,\quad p\geq 1,
\]
and choose positive integers $l_p$ and positive numbers $\rho_p$ such that the open balls of radius $\rho_p$ with respect to the seminorm $\|\cdot\|_{l_p}$ around $y^{(p)}$, $p\geq 1$, are pairwise disjoint. Then, by $\mathcal{A}$-hypercyclicity of $x$, there are sets $A_p\in\mathcal{A}$, $p\geq 1$, such that, for all $n\in A_p$, 
\[
\|B^nx-y^{(p)}\|_{\max(l_p,n_p,m_p)}<\min \Big(\rho_p,\alpha_{p},\frac{\varepsilon_p}{C_p}\Big).
\]
By the choice of the $l_p$ and $\rho_p$ the sets $A_p$, $p\geq 1$, are pairwise disjoint. We claim that they satisfy the conditions (i) and (ii). 

First, it follows from (F') that, for any $n\in A_p$ and $j=0,\ldots,p$,
\begin{equation}\label{eq-dp}
\Big|x_{n+j}-\Big(1+(j+1)\frac{C_p}{\varepsilon_p}\Big)\Big|<1,
\end{equation}
hence 
\[
\Big|\frac{1}{x_{n+j}}\Big|<\frac{\varepsilon_p}{(j+1)C_p}\leq \varepsilon_p\leq 1. 
\]
Thus by property (M') and the fact that $x\in X$ we obtain that
\[
\sum_{n\in A_p} \frac{1}{x_{n+p}}x_{n+p} e_{n+p}
\]
converges in $X$. This shows condition (i). 

Secondly, let $p,q\geq 1$, $m\in A_q$ and $j=0,\ldots,p$. If $p\leq q$ then 
\[
\|B^mx-y^{(q)}\|_{m_q} < \frac{\varepsilon_q}{C_q}= \frac{\min(\varepsilon_p,\varepsilon_q)}{C_q}\quad \text{and}\quad \sup_{n\in A_p}\Big|\frac{1}{x_{n+j}}\Big|\leq 1;
\]
and if $p> q$ then 
\[
\|B^mx-y^{(q)}\|_{m_q} < \frac{\varepsilon_q}{C_q}\leq \frac{1}{C_q} \quad \text{and}\quad \sup_{n\in A_p}\Big|\frac{1}{x_{n+j}}\Big|\leq \varepsilon_p = \min(\varepsilon_p,\varepsilon_q).
\]
In both cases, (M') implies that 
\begin{equation}\label{eq-M}
\Big\|\sum_{\substack{n\in A_p\\n>m}} \frac{1}{x_{n+j}}[B^mx-y^{(q)}]_{n-m+j} e_{n-m+j} \Big\|_q<\min(\varepsilon_p,\varepsilon_q);
\end{equation}
again, $[z]_k$ denotes the $k$th entry of the sequence $z$. But by \eqref{eq-dp}, we have for $n\in A_p$ with $n>m$ and $k=0,\ldots,q$ that
\[
\Big|x_{n+j} -x_{m+k}-(j+1)\frac{C_p}{\varepsilon_p}+(k+1)\frac{C_q}{\varepsilon_q}\Big|<2.
\]
This implies that $n+j\neq m+k$. Indeed, equality can hold by \eqref{eq-dpf} only if $p=q$ and $j=k$, which is impossible since $m>n$. Thus we have that $n-m+j>q$ and therefore
\[
[B^mx-y^{(q)}]_{n-m+j}=x_{n+j}.
\]
Hence \eqref{eq-M} implies that also condition (ii) is satisfied.
\end{proof}

In view of the results in \cite{BaRu15} and \cite{BMPP15} it is not surprising that our conditions involve a sequence  of sets $A_p$ that interact. It is not clear if, for general Furstenberg families, this is necessarily so. 

General weighted shifts can now be treated in the way described earlier.

\begin{theorem}\label{t-bwahcgen}
Let $X$ be a Fr\'echet sequence space in which $(e_n)_{n\geq 0}$ is a basis. Suppose that the weighted backward shift $B_w$ is an operator on $X$. Let $\mathcal{A}$ be an f.i.~Furstenberg family. 

\emph{(a)} If there exists a sequence $(\varepsilon_p)_{p\geq 1}$ of positive numbers with $\varepsilon_p\to 0$ as $p\to\infty$ and a sequence $(A_p)_{p\geq 1}$ of pairwise disjoint sets in $\mathcal{A}$ such that
\begin{itemize}
\item[\rm (i)] for any $p\geq 1$,
\[
\sum_{n\in A_p} \frac{1}{\prod_{\nu=1}^{n+p} w_\nu}e_{n+p} \quad \text{converges in $X$};
\]
\item[\rm (ii)] for any $p,q\geq 1$, any $m\in A_q$, and any $j=0,\ldots,p$,
\[
\Big\|\sum _{\substack{n\in A_p\\n>m}} \frac{1}{\prod_{\nu=1}^{n-m+j} w_\nu} e_{n-m+j}\Big\|_q <\min(\varepsilon_p,\varepsilon_q);
\]
\end{itemize}
then $B_w$ is $\mathcal{A}$-hypercyclic.

\emph{(b)} If $(e_n)_{n\geq 0}$ is an unconditional basis then the converse also holds.

Moreover, one may replace `there exists a sequence $(\varepsilon_p)_{p\geq 1}$' equivalently by `for any sequence $(\varepsilon_p)_{p\geq 1}$.'
\end{theorem}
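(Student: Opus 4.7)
The plan is to reduce Theorem \ref{t-bwahcgen} directly to its unweighted counterpart, Theorem \ref{t-bahcgen}, via the conjugacy already recalled in Section \ref{s-weiupp}. Put $v_n = 1/\prod_{\nu=1}^n w_\nu$ (with $v_0=1$) and form the Fr\'echet sequence space $X(v) = \{(x_n) : (x_n v_n) \in X\}$, equipped with the seminorms $\|x\|_{X(v),q} := \|(x_n v_n)\|_q$ transported from $X$. The map $\phi_v : X(v) \to X$, $(x_n) \mapsto (x_n v_n)$, is then a topological isomorphism that intertwines $B$ on $X(v)$ with $B_w$ on $X$. Since $\mathcal{A}$-hypercyclicity is preserved under conjugacy, $B_w$ is $\mathcal{A}$-hypercyclic on $X$ if and only if $B$ is $\mathcal{A}$-hypercyclic on $X(v)$.

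Next I would check that the hypotheses of Theorem \ref{t-bahcgen} hold for $B$ on $X(v)$. Being the pullback of a Fr\'echet sequence space by $\phi_v$, the space $X(v)$ is itself a Fr\'echet sequence space. The canonical unit vectors $(e_n)$ form a Schauder basis of $X(v)$, and form an unconditional basis if they do so in $X$, because every expansion in $X(v)$ is the $\phi_v^{-1}$-image of an expansion in $X$ (note that $\phi_v^{-1}(e_n) = v_n^{-1} e_n$ up to rescaling, so bases pull back to bases). The operator $B$ is continuous on $X(v)$ since it is conjugate to the continuous operator $B_w$ on $X$.

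With Theorem \ref{t-bahcgen} available for $B$ on $X(v)$, translating its conditions (i) and (ii) through $\phi_v$ finishes the proof. Concretely, $\sum_{n \in A_p} e_{n+p}$ converges in $X(v)$ if and only if $\sum_{n \in A_p} \phi_v(e_{n+p}) = \sum_{n \in A_p} v_{n+p}\, e_{n+p} = \sum_{n \in A_p} \bigl(\prod_{\nu=1}^{n+p} w_\nu\bigr)^{-1} e_{n+p}$ converges in $X$, matching condition (i) of the present theorem; and by the definition of $\|\cdot\|_{X(v),q}$, the quantity $\bigl\|\sum_{n \in A_p,\, n > m} e_{n-m+j}\bigr\|_{X(v),q}$ equals $\bigl\|\sum_{n \in A_p,\, n > m} \bigl(\prod_{\nu=1}^{n-m+j} w_\nu\bigr)^{-1} e_{n-m+j}\bigr\|_q$, which is exactly what appears in condition (ii) here. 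The final clause allowing one to replace ``there exists a sequence $(\varepsilon_p)$'' by ``for any sequence $(\varepsilon_p)$'' transfers verbatim, since it only requires passing to a subsequence of $(A_p)$ inside $\mathcal{A}$ and invoking the continuity of $B$, both operations that are intrinsic to $X(v)$.

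The only real obstacle is bookkeeping---tracking the products $\prod_{\nu=1}^{n+p} w_\nu$ and $\prod_{\nu=1}^{n-m+j} w_\nu$ through the indices after applying $\phi_v$. There is no conceptual difficulty, because the conjugacy does all the work; in particular no new variant of the $\mathcal{A}$-Hypercyclicity Criterion has to be proved, and the constructive argument from the proof of Theorem \ref{t-bahcgen} is inherited unchanged.
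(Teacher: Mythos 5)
Your proposal is correct and is exactly the argument the paper intends: Theorem \ref{t-bwahcgen} is stated without a separate proof precisely because it follows from Theorem \ref{t-bahcgen} by the conjugacy $\phi_v: X(v)\to X$ described in Section \ref{s-weiupp}, with the conditions transforming under $\phi_v$ just as you compute. Your bookkeeping of the weight products and the transfer of the basis, the seminorms, and the final ``for any sequence $(\varepsilon_p)$'' clause all match what the paper leaves to the reader.
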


Let us consider the special case of shifts on $c_0$, where we rephrase the conditions slightly in view of an application in the next section.

\begin{corollary}\label{c-bwahcgen}
Let $w=(w_n)_{n\geq 1}$ be a bounded sequence of non-zero scalars and $\mathcal{A}$ an f.i.~Furstenberg family. Then the weighted backward shift $B_w$ is $\mathcal{A}$-hypercyclic on $c_0$ if and only if there exists a sequence $(M_p)_{p\geq 1}$ of positive numbers with $M_p\to \infty$ as $p\to\infty$ and a sequence $(A_p)_{p\geq 1}$ of pairwise disjoint sets in $\mathcal{A}$ such that
\begin{itemize}
\item[\rm (i)] for any $p\geq 1$,
\[
|w_1w_2\cdots w_{n+p}| \to \infty\quad\text{as $n\to\infty$, $n\in A_p$};
\]
\item[\rm (ii)] for any $n\in A_p$, $m\in A_q$, $p\leq q$, with $n\neq m$,
\[
|w_1w_2\cdots w_{|n-m|+j}| > M_q,
\]
where $j=0,\ldots,p$ if $n>m$, and $j=0,\ldots,q$ if $n<m$.
\end{itemize}
\end{corollary}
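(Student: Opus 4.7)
The plan is to deduce this corollary directly from Theorem~\ref{t-bwahcgen} applied to the Banach space $X = c_0$, using that $(e_n)_{n\geq 0}$ is an unconditional Schauder basis of $c_0$ and that $B_w$ is a bounded operator on $c_0$ precisely when $w$ is bounded. In $c_0$ the topology is given by the single norm $\|\cdot\|_\infty$, so every seminorm $\|\cdot\|_q$ may be identified with $\|\cdot\|_\infty$; moreover $\|\sum_n \alpha_n e_n\|_\infty = \sup_n |\alpha_n|$ and the series $\sum_n \alpha_n e_n$ converges in $c_0$ if and only if $\alpha_n \to 0$.

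First I would set up the correspondence between the two sets of parameters by putting $\varepsilon_p := 1/M_p$, so that $M_p \to \infty$ is equivalent to $\varepsilon_p \to 0$. By passing to a subsequence of $(M_p)_{p\geq 1}$ if necessary (with the corresponding reindexing of $(A_p)_{p\geq 1}$), one may assume that $(M_p)$ is increasing, so that $(\varepsilon_p)$ is decreasing and $\min(\varepsilon_p, \varepsilon_q) = \varepsilon_{\max(p,q)} = 1/M_{\max(p,q)}$.

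Next I would translate each condition. Condition~(i) of Theorem~\ref{t-bwahcgen}, that $\sum_{n\in A_p} \tfrac{1}{\prod_{\nu=1}^{n+p} w_\nu}\, e_{n+p}$ converges in $c_0$, amounts by the criterion above to $|w_1 w_2 \cdots w_{n+p}| \to \infty$ as $n \to \infty$, $n \in A_p$, which is condition~(i) of the corollary. Condition~(ii) of the theorem, for fixed $p, q \geq 1$, $m \in A_q$ and $0 \leq j \leq p$, reads
\[
\sup_{n \in A_p,\, n > m} \frac{1}{|w_1 \cdots w_{n-m+j}|} < \frac{1}{M_{\max(p,q)}},
\]
i.e.\ $|w_1 \cdots w_{n-m+j}| > M_{\max(p,q)}$ for all such~$n$.

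Finally I would reconcile the two index cases in the corollary's condition~(ii). When $p \leq q$ one has $\max(p,q) = q$, so the theorem's condition~(ii) applied with this ordering yields directly the case $n > m$ of the corollary (with $j = 0, \ldots, p$). The case $n < m$ (still with $p \leq q$) is obtained by applying the same condition~(ii) of the theorem with the roles of $(p, A_p, n)$ and $(q, A_q, m)$ swapped: then $m \in A_q$ plays the role of the ``$n$''-variable and $n \in A_p$ of the ``$m$''-variable, $m > n$, and one obtains $|w_1 \cdots w_{m-n+j}| > M_{\max(q,p)} = M_q$ for $j = 0, \ldots, q$, which is precisely the second half of the corollary. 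There is no substantive obstacle in the argument; the only piece of bookkeeping worth watching is the $p \leftrightarrow q$ symmetry that produces the two alternatives $n > m$ and $n < m$ in condition~(ii) from the single-direction statement in the theorem.
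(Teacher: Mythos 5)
Your proposal is correct and is exactly the route the paper intends: the corollary is stated as a direct specialization of Theorem~\ref{t-bwahcgen} to $X=c_0$, where the seminorms collapse to the sup-norm, convergence of $\sum \alpha_n e_n$ means $\alpha_n\to 0$, and $\varepsilon_p=1/M_p$ with the $p\leftrightarrow q$ symmetry accounts for the two cases $n>m$ and $n<m$. Your reduction to an increasing $(M_p)$ (needed so that $\max(M_p,M_q)=M_q$ for $p\le q$) is the only point requiring care, and your subsequence argument handles it.
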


\section{Examples}\label{s-ex}

In this section we simplify three known counter-examples. And we add a new one that sheds some light on Question \ref{q-chaos}.

Trivially, frequent hypercyclicity implies upper frequent hypercyclicity, which in turn implies reiterative hypercyclicity. While, in general, chaos and frequent hypercyclicity are incomparable concepts (see \cite{BaGr07}, \cite{Men15}), in the framework of weighted backward shifts on F-sequence spaces in which $(e_n)$ is an unconditional basis, chaos implies frequent hypercyclicity, see \cite[Corollary 9.14]{GrPe11}. 

It is known that none of these three implications can be reversed:
\[
\text{reiterative hc} \underset{\rm (a)}{\centernot \Longrightarrow} \text{upper frequent hc} \underset{\rm (b)}{\centernot \Longrightarrow} \text{frequent hc} \underset{\rm (c)}{\centernot \Longrightarrow} \text{chaos}
\]
In each case, the first counter-example was constructed using a weighted backward shift on $c_0$. More precisely, (a) is due to B\`es, Menet, Peris and Puig \cite[Theorem 7]{BMPP15}, (b) to Bayart and Ruzsa \cite[Theorem 5]{BaRu15}, and (c) to Bayart and Grivaux \cite[Corollary 5.2]{BaGr07}. However, the proofs take up a combined twelve printed pages. Based on our work we can come up with substantially simpler counter-examples for (a) and (b), while the ideas of these examples and the characterization of frequently hypercyclic weighted shifts on $c_0$ (see Bayart and Ruzsa \cite[Theorem 13]{ BaRu15} or Corollary \ref{c-bwahcgen} above) allow us to do likewise for (c). In fact, our counter-example for (c) can be strengthened to be even very frequently hypercyclic.

Before going into details we note that the characterizations of dynamical properties of the weighted shifts $B_w$ take a simpler form when using the products of the weights:
\[
\varpi_n=w_1w_2\cdots w_n,\quad n\geq 0.
\]
This is due to the conjugacy of $B_w$ with the unweighted shift $B$ on $c_0(v)$, where $v=(\frac{1}{\varpi_n})_{n\geq 0}$; see Section \ref{s-weiupp}. We can, and will, therefore simplify the notation by working in terms of $\varpi$ instead of $w$. Note that $B_w$ is an operator on $c_0$ if and only if $\sup_{n\geq 0}|\frac{\varpi_{n+1}}{\varpi_n}|<\infty$.

\begin{theorem}[B\`es, Menet, Peris, and Puig \cite{BMPP15}]  There exists a weighted backward shift $B_w$ on $c_0$ that is reiteratively hypercyclic but not upper frequently hypercyclic.
\end{theorem}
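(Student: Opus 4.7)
The starting point is Corollary \ref{c-bwahc} applied to both the family $\mathcal{A}_{\text{uBd}}$ of sets of positive upper Banach density and the family $\mathcal{A}_{\text{ud}}$ of sets of positive upper density. The task reduces to exhibiting a bounded weight sequence $(w_n)_{n\geq 1}$ whose products $\varpi_n=w_1w_2\cdots w_n$ satisfy conditions (i)--(ii) of that corollary for $\mathcal{A}_{\text{uBd}}$ but fail them for $\mathcal{A}_{\text{ud}}$.

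My first step will be to design $\varpi$ as a succession of ``plateau'' blocks $B_k=[N_k,N_k+L_k]$ placed at a very rapidly growing sequence of positions $N_k$, with block lengths $L_k\to\infty$ and plateau heights $V_k:=|\varpi_n|$ (for $n\in B_k$) tending to infinity, connected to a small baseline $|\varpi_n|\approx 1$ by short ramps on which $|w_n|\in\{1/2,1,2\}$, so that $(w_n)$ is bounded and $B_w$ acts on $c_0$. The $N_k$ will be chosen so sparsely that $\overline{\text{dens}}\;\{n:|\varpi_n|\geq M\}\to 0$ as $M\to\infty$. Given this, any set $A$ along which $|\varpi_n|\to\infty$ is, for each $M$, eventually contained in $\{n:|\varpi_n|\geq M\}$, so $\overline{\text{dens}}(A)\leq\overline{\text{dens}}\;\{n:|\varpi_n|\geq M\}\to 0$ and hence $\overline{\text{dens}}(A)=0$. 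Thus no set of positive upper density can satisfy condition (i) of Corollary \ref{c-bwahc}, and upper frequent hypercyclicity is ruled out.

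For reiterative hypercyclicity, given $p$ and $M$ I will take $A=\bigcup_{k\geq k_0}A_k$, where $k_0$ is chosen so that $V_k\geq M$ for $k\geq k_0$ and each $A_k$ is a long sub-interval of $B_k$ with $|A_k|\to\infty$. Because the window equal to $A_k$ contains $A$ with density $1$ and $|A_k|\to\infty$, the set $A$ has upper Banach density $1$; condition (i) follows from $V_k\to\infty$. The serious point is condition (ii): the shifted differences $n-m+p$ with $n>m$ in $A_k$ sweep out an interval of length comparable to $L_k$, and I must ensure $|\varpi_j|>M$ at every such index $j$. I plan to arrange this by embedding each block $B_k$ inside a larger ``envelope'' on which $|\varpi|\geq M$ throughout, i.e.\ by using a nested/staircase placement of the blocks together with a pre-specified high-$|\varpi|$ initial segment whose length $J=J(M)$ exceeds the diameter of $A_k$ shifted by $p$, so that the difference indices fall into either the initial segment or an earlier block, both in the region $\{|\varpi|>M\}$.

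The main obstacle is the tension between the two density requirements: since a set $A$ of positive upper Banach density has a syndetic difference set, condition (ii) forces $\{n:|\varpi_n|>M\}$ to be syndetic, hence of positive lower density, for every fixed $M$; yet we also need this upper density to vanish as $M\to\infty$ in order to rule out upper frequent hypercyclicity. The construction must therefore produce a hierarchy of high-$|\varpi|$ regions of strictly positive density at each level, with the density of the $M$th level shrinking to $0$ as $M\to\infty$. Achieving this simultaneously with the difference-set condition (ii) through a careful multiscale choice of the parameters $L_k$, $V_k$, $N_k$ (and of the sub-intervals $A_k$) is the technical heart of the argument.
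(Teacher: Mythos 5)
There is a genuine gap, and it lies exactly where you place the ``technical heart'': your choice of the witnessing set $A$ cannot work, and the tension you correctly identify in the last paragraph is in fact fatal to the interval-based design. If $A_k$ is an interval of length $|A_k|\to\infty$, then the difference set of $A\supset A_k$ contains $\{1,\dots,|A_k|-1\}$, so condition (ii) of Corollary \ref{c-bwahc} forces $|\varpi_{d+p}|>M$ for \emph{every} $d\geq 1$ (letting $k\to\infty$), i.e.\ $\{j:|\varpi_j|>M\}$ is cofinite for every $M$. Your proposed remedies cannot avoid this: the relevant indices $n-m+p$ for $n,m\in A_k$ lie near the origin, in $[1+p,|A_k|-1+p]$, not near $N_k$, so the ``high-$\varpi$ initial segment of length $J(M)$ exceeding the diameter of $A_k$'' would have to be infinite. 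Hence $|\varpi_n|\to\infty$, which makes $B_w$ chaotic and therefore (upper) frequently hypercyclic -- destroying the counterexample. Equivalently: your own syndeticity observation shows that $\{n:|\varpi_n|>M\}$ must be syndetic for each $M$, which is incompatible with plateaus at sparsely placed positions $N_k$; with your weight, $B_w$ would not be reiteratively hypercyclic at all. The upshot is that $A$ must have positive upper Banach density \emph{strictly less than} $1$ and, crucially, a difference set that is sparse (of upper density tending to $0$ as $M\to\infty$) yet syndetic.

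The paper achieves exactly this with arithmetic structure rather than intervals. The weight is large on small bumps around every multiple of $10^j$ (height $2^{2j}$, width $2j+1$), so that $D_M=\{n:\varpi_n\geq M\}$ is syndetic for each $M$ but $\overline{\text{dens}}\,D_{2^{2k+1}}\leq\sum_{j>k}(2j+1)10^{-j}\to 0$, which rules out upper frequent hypercyclicity just as in your negative part (that part of your outline is fine). For the positive part, given $p$ and $M$ one fixes $k\geq p$ with $2^{k+p}>M$ and takes $A$ to be a union of longer and longer finite arithmetic progressions of common difference $10^k$, placed at positions $10^{j_m}$ with $j_m\geq m10^k$; then $A$ has upper Banach density $\geq 10^{-k}>0$ (not $1$), $A-A\subset 10^k\mathbb{Z}$, and every nonzero multiple of $10^k$ sits at the centre of a bump of radius $k\geq p$, giving $\varpi_{n-m+p}\geq 2^{k+p}>M$. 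To repair your proof you would need to replace the sub-intervals $A_k$ by such progressions and redesign $\varpi$ so that its high values are concentrated on the (syndetic but sparse) sets $10^k\mathbb{Z}+[0,k]$; as written, the proposal does not contain this idea.
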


\begin{proof} Motivated by B\`es et al. we consider 
\[
S = \bigcup_{j,l\geq 1}S_{j,l}, \quad S_{j,l}=[l10^j-j,l10^j+j],
\]
where the intervals are sets of non-negative integers. We define
\[
\varpi_n = \max \{ 2^{\nu} : n\in S_{j,l}, n=l10^j-j+\nu, j,l\geq 1\}
\] 
if $n\in S$, and $\varpi_n = 1$ otherwise. It is easy to see that $\frac{\varpi_{n+1}}{\varpi_n}\leq 2$ for all $n\geq 0$, so that $B_w$ is an operator on $c_0$. Incidentally, the bases 2 and 10 are only chosen for the sake of better readability; any other integer bases larger than 1 work just as well.

We will first verify the conditions in Corollary \ref{c-bwahc} for the family $\mathcal{A}$ of sets of positive upper Banach density. 

Thus, let $p\geq 0$ and $M>0$. We fix $k\geq p$ such that $2^{k+p}>M$, and define
\[
A= \{10^{j_m}+l10^k : l\in [0,m], m\geq 1\},
\]
where $j_m$ is chosen such that $j_m\geq m10^k$, $m\geq 1$. Note that $\min A \geq p$.

First of all, since each interval $[10^{j_m},10^{j_m}+N10^k],$ $N\geq 0$, has cardinality $N10^k+1$ and contains at least $N+1$ elements of $A$ whenever $m\geq N$, we have that $\overline{\text{Bd}}\; A\geq 10^{-k}>0$.

Next, the fact that $m10^k\leq j_m$ for $m\geq 1$ ensures that 
\[
\varpi_n \geq 2^{j_m}
\]
whenever $n\in A$ is of the form $n=10^{j_m}+l10^k$, $l\in [0,m]$, $m\geq 1$. This shows that $\varpi_n \to \infty$ along $A$.

Finally, let $n,m\in A$, $n>m$. Then $n-m$ is a multiple of $10^k$ (note that any $j_m\geq k$), and since $p\leq k$ we obtain that
\[
\varpi_{n-m+p} \geq 2^{k+p}>M.
\]

Thus we have shown that $B$ is reiteratively hypercyclic.

Next suppose that $B$ is even upper frequently hypercyclic. Then by Corollary \ref{c-bwahc} there is a set $A$ of positive upper density such that
\[
\varpi_n\to \infty
\]
as $n\to\infty$ along $A$. Like B\`es et al. we consider the sets
\[
D_j = \{n\geq 1 : \varpi_n \geq 2^{j}\},\quad j\geq 1.
\]
Since each element of $A$ belongs to $D_j$ with at most finitely many exceptions, we have that $\overline{\text{dens}}\; A \leq \overline{\text{dens}}\; D_j$ for any $j\geq 1$. To come up with the desired contradiction it thus suffices to show that $\overline{\text{dens}}\; D_{2k+1}\to 0$ as $k\to\infty$.

Now, if $n\in S$, any interval $S_{j,l}$ that contains $n$ contributes at most $2^{2j}$ to the weight $\varpi_n$. Therefore, if 
\[
\varpi_n \geq 2^{2k+1},
\]
then $n$ must satisfy
\[
n\in S_{j,l}\quad \text{for some $j >k$, $l\geq 1$}.
\]

So, let $N\geq 0$. Then $S_{j,l}$ intersects $[0,N]$ only if
\[
l\leq \Big\lfloor \frac{N+j}{10^j}\Big\rfloor.
\]
Thus, for any $j\geq 1$, the interval $[0,N]$ meets at most $\lfloor\frac{N+j}{10^j}\rfloor$ intervals $S_{j,l}$, $l\geq 1$, each of which contains $2 j+1$ elements. Hence
\[
\text{card} (D_{2k+1}\cap [0,N]) \leq \sum_{j=k+1}^\infty (2j+1) \Big\lfloor    \frac{N+j}{10^j}\Big\rfloor \leq (N+1)\sum_{j=k+1}^\infty \frac{2j+1}{10^j} + \sum_{j=k+1}^\infty \frac{2j^2}{10^j},
\]
and therefore
\[
\overline{\text{dens}}\;D_{2k+1} \leq \sum_{j=k+1}^\infty \frac{2j+1}{10^j} \to0
\]
as $k\to\infty$. This had to be shown.
\end{proof} 

A straightforward modification of the above example allows us to provide a new, and much shorter, proof of the following.

\begin{theorem}[Bayart and Ruzsa \cite{BaRu15}]  There exists a weighted backward shift $B_w$ on $c_0$ that is upper frequently hypercyclic but not frequently hypercyclic.
\end{theorem}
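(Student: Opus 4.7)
The plan is to mimic the construction of the preceding theorem, modifying only the placement of the intervals $S_{j,l}$ so that the sets $D_j=\{n:\varpi_n\geq 2^j\}$ now have positive \emph{upper} density for every $j$ (which will yield UFHC via Corollary~\ref{c-bwahc}), while still satisfying $\underline{\text{dens}}\,D_j\to 0$ as $j\to\infty$ (which will block FHC via Corollary~\ref{c-bwahcgen}). Concretely, one takes $\varpi_n = \max\{2^\nu : n\in S_{j,l},\; n=a_{j,l}-j+\nu,\; j,l\geq 1\}$ when $n$ lies in $S=\bigcup_{j,l}S_{j,l}$ with $S_{j,l}=[a_{j,l}-j,a_{j,l}+j]$, and $\varpi_n=1$ otherwise; the centres $a_{j,l}$ are chosen so that, in contrast with the previous construction where the spacing $10^j$ made each level have zero upper density, the level-$j$ intervals are packed closely enough (with spacing depending on $j$ but not super-exponential) to yield $\overline{\text{dens}}\,D_j\geq c>0$, yet globally sparse enough that the lower density of $D_j$ still decays with $j$.

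Granting such a weight, I would verify UFHC essentially as in the previous proof: given $p\geq 0$ and $M>0$, fix $k\geq p$ with $2^{k+p}>M$ and build a set $A\subset[p,\infty)$ of positive upper density out of arithmetic progressions of common difference a multiple of $10^k$ placed inside sufficiently deep level-$j_m$ intervals with $j_m\to\infty$. By construction $\varpi_n\to\infty$ along $A$, and $\varpi_{n-m+p}>M$ whenever $n,m\in A$ with $n>m$ since $n-m$ is a multiple of $10^k$; Corollary~\ref{c-bwahc} then gives UFHC. The one point to check relative to the previous argument is that the denser arrangement of $S_{j,l}$ is still compatible with $A$ having positive upper density (rather than merely positive upper Banach density) at some $N_m\to\infty$; the choice of $j_m$ growing fast enough ensures this.

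To rule out FHC I would argue by contradiction using Corollary~\ref{c-bwahcgen} with $\mathcal{A}$ the family of sets of positive lower density: a frequently hypercyclic weighted shift on $c_0$ would, in particular, produce a single set $A\in\mathcal{A}_{\text{ld}}$ (namely $A_1$ from the characterising sequence $(A_p)$) along which $\varpi_{n+1}\to\infty$. Then $A+1\subset D_j$ modulo a finite set for every $j$, whence $\underline{\text{dens}}\,A\leq\underline{\text{dens}}\,D_j$, and so it suffices to show $\underline{\text{dens}}\,D_j\to 0$. This would follow from a counting estimate analogous to the bound $\overline{\text{dens}}\,D_{2k+1}\to 0$ in the previous proof, but applied to lower density: roughly, if the level-$j$ intervals have controlled long gaps (for instance their centres lie in a set whose complement contains arbitrarily long windows), then $\underline{\text{dens}}\,D_j$ is bounded by the proportion of weight contributions from levels $\geq j/2$, which tends to $0$ by summing a geometric-type series over $j$.

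The main obstacle, and the only genuinely new ingredient compared with the previous theorem, is the calibration of the centres $a_{j,l}$: they must be chosen so that at every level $j$ the set $D_j$ is asymptotically dense along some subsequence of $N$'s (giving $\overline{\text{dens}}\,D_j>0$ uniformly in $j$), while long enough "holes" persist at every level so that $\underline{\text{dens}}\,D_j\to 0$. Once the right pattern of spacings (denser than $10^j$ yet still growing) is fixed, the UFHC verification and the lower-density counting that precludes FHC are routine modifications of the arguments already displayed.
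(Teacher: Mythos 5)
There is a genuine gap: the entire content of this theorem is the construction of the weight sequence, and your proposal leaves exactly that part unspecified, as you yourself concede when you call the calibration of the centres $a_{j,l}$ ``the main obstacle''. Moreover, the direction you indicate --- keep the level-$j$ intervals of half-width $j$ but pack their centres more densely than the grid $l10^j$ --- runs into two concrete difficulties. First, condition (ii) of Corollary \ref{c-bwahc} requires $|\varpi_{n-m+p}|>M$ for all $n>m$ in $A$; in the reiterative example this works because the centres occupy \emph{all} multiples $l10^j$, so a difference $n-m=l'10^k$ automatically sits at depth $k$ inside $S_{k,l'}$. Since $k$ depends on the arbitrary $M$ and $p$, the centres must stay on every grid $10^k\mathbb{N}$ simultaneously; moving them off the grid $l10^j$ destroys this divisibility mechanism and you offer no replacement. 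Second, a point with $\varpi_n\geq 2^{\nu}$ must lie at depth at least $\nu$ inside an interval of length $2j+1\geq \nu$, so the points of $[0,N]$ with weight at least $w$ occupy at most (number of intervals meeting $[0,N]$) times $O(j)$ positions; to make this a fixed proportion of $N$ as $w\to\infty$ while respecting the grid constraint, one is forced to enlarge the \emph{length} of individual intervals to a scale comparable with their position, not to multiply the number of short ones.

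That enlargement is precisely the idea the paper uses and your proposal lacks: the construction of the previous theorem is kept verbatim (centres at $l10^j$, half-width $j$) except that for $j=q^2$ a perfect square and $l=1$ the interval $S_{j,1}$ is blown up to $[10^j-10^{j-1},10^j+10^{j-1}]$. The set $A=\{10^{q^2}+l10^k : l\in[0,10^{q^2-1-k}],\ q\geq k+1\}$ then lies in the right halves of these huge intervals, has upper density at least $10^{-k}/11$, satisfies $\varpi_n\geq 2^{10^{q^2-1}}$ there, and inherits the difference condition from the untouched grid. Your negative half is essentially correct and matches the paper (the added set $R=\bigcup_q S_{q^2,1}$ has lower density $0$, so $\underline{\text{dens}}\;E_{2k+1}\to 0$, contradicting Corollary \ref{c-bwahcgen}), but as stated the positive half cannot be completed without the enlarged-interval device.
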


\begin{proof} We redefine the sets $S_{j,l}$ of the previous proof by taking, for $j\geq 1$,
\[
S_{j,l}=\begin{cases}[10^j-10^{j-1},10^j+10^{j-1}], &\text{if } j\in Q, l= 1,\\ 
[10^j-j,10^j+j],&\text{if }j\notin Q, l= 1,\\
[l10^j-j,l10^j+j],&\text{if } l\geq 2,\end{cases}
\]
where $Q=\{q^2:q\geq 1\}$, and we set
\[
S=\bigcup_{j,l\geq 1} S_{j,l}.
\]
If $s_{j,l}$ denotes the least value in $S_{j,l}$ then we define
\[
\varpi_n = \max \{ 2^{\nu} : n\in S_{j,l}, n=s_{j,l}+\nu, j,l\geq 1\}
\] 
if $n\in S$, with $\varpi_n=1$ otherwise. As before, $B_w$ is an operator on $c_0$.

In order to prove the positive part of the assertion, let $p\geq 0$ and $M>0$. We fix $k\geq p$ such that $2^{k+p}>M$, and define
\[
A= \{10^{q^2}+l10^k : l\in [0,10^{q^2-1-k}], q\geq k+1\},
\]
which is contained in $[p,\infty)$. Since, for any $q\geq k+1$, the interval $[0, 10^{q^2}+10^{{q^2}-1}]$ contains at least $10^{{q^2}-1-k}$ elements of $A$, we have that
\[
\overline{\text{dens}}\; A\geq \lim_{q\to\infty} \frac{10^{{q^2}-1-k}}{10^{q^2}+10^{{q^2}-1}+1}= \frac{10^{-k}}{11}>0,
\]
so that $A$ has positive upper density. 

Now, if $n\in A$ is of the form $n=10^{q^2}+l10^k$, $l\in [0,10^{{q^2}-1-k}]$, $q\geq k+1$, then we have that $10^{q^2}\leq n\leq  10^{q^2}+10^{{q^2}-1}$, so that 
\[
\varpi_n \geq 2^{10^{q^2-1}},
\]
which shows that $\varpi_n \to \infty$ along $A$.

Moreover, let $n,m\in A$, $n>m$. Then $n-m$ is a multiple of $10^k$. Hence the fact that $p\leq k$ implies that
\[
\varpi_{n-m+p} \geq 2^{k+p}>M.
\]

Altogether we have shown by Corollary \ref{c-bwahc} that $B_w$ is upper frequently hypercyclic.

For the negative part of the assertion, suppose on the contrary that $B_w$ is even frequently hypercyclic. Then by \cite[Theorem 13]{BaRu15} or Corollary \ref{c-bwahcgen} above there is a set $A$ of positive lower density such that
\[
\varpi_n\to \infty
\]
as $n\to\infty$ along $A$. In order to obtain the desired contradiction we need only show that, as before, the sets
\[
E_j = \{n\geq 1 : \varpi_n \geq 2^{j}\},\quad j\geq 1,
\]
satisfy $\underline{\text{dens}}\; E_{2k+1}\to 0$ as $k\to\infty$. But since the set $S$ in the present proof is obtained from the corresponding set in the previous proof by adding the set
\[
R:= \bigcup_{q=1}^\infty S_{q^2,1}
\]
we have that
\[
E_j\subset D_j \cup R,\quad j\geq 1,
\]
where the $D_j$ are the sets defined in the previous proof. Now, for any $q\geq 2$, the interval $[0, 10^{q^2}-10^{q^2-1}-1]$ contains at most 
\[
\sum_{j=1}^{q-1}(2.10^{j^2-1}+1) \leq  3\sum_{j=0}^{(q-1)^2-1}10^j \leq \frac{1}{3} 10^{q^2-2q+1}
\]
elements of $R$, whence
\[
\underline{\text{dens}}\; R\leq \lim_{q\to\infty} \frac{10^{q^2-2q+1}}{3(10^{q^2}-10^{{q^2}-1})}= 0,
\]
so that $R$ has zero lower density. In the previous proof it has already been observed that, as $k\to\infty$, $\overline{\text{dens}}\; D_{2k+1}\to 0$, which implies that 
\[
\underline{\text{dens}}\; E_{2k+1}\to 0,
\]
as had to be shown.
\end{proof} 

We next give a new proof that frequent hypercyclicity does not imply chaos. For this we invoke Corollary \ref{c-bwahcgen}; moreover, motivated by the previous two examples, we use divisibility arguments to simplify the proof.

\begin{theorem}[Bayart and Grivaux \cite{BaGr07}]\label{t-BaGr}   
There exists a weighted backward shift $B_w$ on $c_0$ that is frequently hypercyclic but not chaotic.
\end{theorem}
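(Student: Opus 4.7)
The plan is to invoke Corollary \ref{c-bwahcgen} for $\mathcal{A}=\mathcal{A}_{\text{ld}}$ (which is f.i.) to establish frequent hypercyclicity, and then to rule out chaos by showing that $B_w$ admits \emph{no} non-zero periodic point in $c_0$.

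For the construction, I would pick a rapidly growing divisibility chain $N_1\mid N_2\mid N_3\mid\cdots$, e.g.\ $N_p=10^{c_p}$ with $c_p\uparrow\infty$, and set $\varpi_n=1$ outside a family of blocks $S_{p,l}$ placed at the positions $lN_p$ (for $l$ in a thin positive-density subset of $\mathbb{N}$), inside which $\varpi$ doubles at each step up to some length $L(p)\to\infty$; this keeps $|w_{n+1}|=|\varpi_{n+1}/\varpi_n|$ bounded so that $B_w$ is an operator on $c_0$. The sets $A_p$ would consist of the starting indices of the blocks at scale $p$, pruned to be pairwise disjoint by retaining only multiples of $N_p$ that are \emph{not} multiples of $N_{p+1}$. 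Each $A_p$ then has lower density $(1-N_p/N_{p+1})/N_p>0$, so $A_p\in \mathcal{A}_{\text{ld}}$, and the divisibility chain forces $A_p\cap A_q=\varnothing$ for $p\neq q$.

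To verify condition (i) of Corollary \ref{c-bwahcgen}, one reads off directly that $\varpi_{n+p}\to\infty$ along $A_p$, since $n+p$ falls inside a block of ever-increasing height. For condition (ii), the key divisibility observation is that for $n\in A_p,\, m\in A_q$ with $p\leq q$ and $n\neq m$, the difference $|n-m|$ is a \emph{positive multiple of $N_p$}, so (by making the block at $S_{p,0}$ long enough, say $L(p)\geq q$ for the finitely many relevant $q$) the shifted index $|n-m|+j$ with $j\leq\max(p,q)$ still lies inside a block at scale $p$, forcing $\varpi_{|n-m|+j}>M_q$ for any preassigned $M_q\to\infty$. This gives frequent hypercyclicity.

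For the absence of chaos, suppose $x\in c_0$ satisfies $B_w^N x = x$ for some $N\geq 1$; iterating gives $x_{n+kN}=(\varpi_n/\varpi_{n+kN})\,x_n$, so that $x_n\neq 0$ would force $\varpi_{n+kN}\to\infty$ as $k\to\infty$. I would then show that for every fixed $N$ and every residue class $n_0\bmod N$, the sequence $(\varpi_{n_0+kN})_k$ does \emph{not} tend to infinity: only finitely many scales $p$ have $N_p\leq N$, while the blocks at scales $p$ with $N_p>N$ occupy an asymptotically negligible fraction of any fixed arithmetic progression of step $N$, so infinitely many $k$ land outside all blocks and there $\varpi_{n_0+kN}=1$. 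Hence $x=0$, so the periodic points are not dense in $c_0$ and $B_w$ is not chaotic.

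The main obstacle is the simultaneous maintenance of positive \emph{lower} (not merely upper) density for each $A_p$ together with the strong growth of $\varpi$ required by condition (ii) of Corollary \ref{c-bwahcgen}; in the previous two theorems only positive upper density sufficed, which permitted extremely sparse $A$'s. The divisibility chain $N_p\mid N_q$ is the device that simultaneously (a) makes $|n-m|$ fall into a controlled arithmetic progression so condition (ii) can be checked without further work, (b) guarantees positive lower density of each $A_p$, and (c) keeps the blocks so thin that their complement meets every arithmetic progression infinitely often, ruling out chaos.
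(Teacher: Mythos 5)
Your overall strategy coincides with the paper's (invoke Corollary \ref{c-bwahcgen} for the family of sets of positive lower density, build $\varpi$ out of blocks on which it doubles, use a divisibility chain to control $|n-m|$, and rule out chaos via the behaviour of $\varpi_n$), but the verification of condition (ii) --- which is the heart of the matter --- does not work as you describe it. First, the fact that $|n-m|$ is a positive multiple of $N_p$ does not by itself force $\varpi_{|n-m|+j}$ to be large: in your construction the blocks at scale $p$ sit only at a \emph{thin} subset of the multiples of $N_p$, and a difference of two elements of a thin set need not belong to that set, so $|n-m|$ may land where $\varpi=1$. Even if every multiple of $N_p$ were the left endpoint of a block, $\varpi$ equals $2^j$ at depth $j$ from that endpoint, so for $j=0$ you would get $\varpi_{|n-m|}=1$, not $>M_q$. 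What is actually needed is that $|n-m|+j$ lies at depth $>\log_2 M_q$ \emph{inside} a block; the paper arranges this by using a single global decomposition $\mathbb{N}_0=\bigcup_k[a_{k-1},a_k)$ with $a_k=5(10^{k^2}+\dots+10^1)$, whose boundaries are simultaneously adapted to all moduli $10^{p^2}$ so that every multiple of $10^{p^2}$ sits at depth at least $p$ and codepth at least $p$ in its block (property \eqref{eq-ak}). Second, and more seriously, even with that structure the case $p<q$ fails: divisibility by the \emph{smaller} modulus $N_p$ can only guarantee depth about $p$, hence $\varpi_{|n-m|+j}\geq 2^p$, which is not $>M_q$ when $M_q\to\infty$. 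This cross case cannot be checked ``without further work'': the paper must additionally prune $C_p$ by removing the sets $Y_q$, $q>p$, of points whose distance to some $m\in C_q$ is near a block boundary, and then prove by a separate density estimate that the pruned sets $A_p$ still have positive \emph{lower} density. That pruning-plus-density step is the main technical content of the proof and is entirely missing from your plan; your density formula $(1-N_p/N_{p+1})/N_p$ only applies to the unpruned sets.

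Two smaller points. For condition (i) you need $\varpi_{n+p}\to\infty$ along each fixed $A_p$, so the block containing $n+p$ must have height growing with $n$; but blocks at scale $1$ placed at a positive-density set of positions with lengths tending to infinity will eventually overlap and cover a full-density portion of $\mathbb{N}_0$, which then undermines your non-chaos argument that ``infinitely many $k$ land outside all blocks.'' The paper sidesteps both issues at once: the blocks $[a_{k-1},a_k)$ tile $\mathbb{N}_0$, condition (i) is obtained by deleting the small set $X=\bigcup_k[a_k,a_k+k]$ from $C_p$, and non-chaos is immediate from $\varpi_{a_k}=1$ for all $k$ together with the standard characterization of chaotic weighted shifts on $c_0$ (\cite[Example 4.9(a)]{GrPe11}), with no need to analyse periodic points along arithmetic progressions.
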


\begin{proof} In fact, the weights are easily described. We fix $a_0=0$ and
\[
a_k = 5.10^{k^2}+5.10^{(k-1)^2}+\ldots+5.10^1,\quad k\geq 1,
\]
and we let 
\[
\varpi_n = 2^{\nu}
\]
whenever $n\in [a_{k-1},a_k)$, $k\geq 1$, with $\nu=n-a_{k-1}$. A crucial property of the $a_k$ is that if a multiple $l.10^{p^2}$ of $10^{p^2}$, $p\geq 1$, is smaller than (respectively at least) $a_k$ then so is $l.10^{p^2}+10^{(p-1)^2}$ (respectively $l.10^{p^2}-10^{(p-1)^2}$). Since $p\leq 10^{(p-1)^2}$ we have that, for any $l,p,k\geq 1$,
\begin{equation}\label{eq-ak}
a_{k-1} \leq l.10^{p^2} < a_k \quad \Longrightarrow \quad a_{k-1}+p\leq l.10^{p^2}< l.10^{p^2}+p < a_k.
\end{equation}

Again, $B_w$ is an operator on $c_0$, and it is not chaotic because $\varpi_n=1$ for $n=a_k$, $k\geq 0$, and hence $\varpi_n\not\to \infty$ as $n\to\infty$; see \cite[Example 4.9(a)]{GrPe11}.

We will now show that $B_w$ satisfies conditions (i) and (ii) of Corollary \ref{c-bwahcgen} for the Furstenberg family of sets of positive lower density with $M_p=2^{p-1}$, $p\geq 1$. In order to construct the desired sets $A_p$ we start with 
\[
C_p = \{ l.10^{p^2} : l\geq 1\},\quad p\geq 1, 
\]
which has (lower) density $10^{-p^2}$.

Now, if $n,m\in C_p$, $n\neq m$, then $|n-m|$ is a non-zero multiple of $10^{p^2}$, and we have that $a_{k-1}\leq |n-m| < a_k$ for some $k\geq 1$. Property \eqref{eq-ak} then implies that, for $j=0,\ldots,p$,
\[
a_{k-1}+p\leq |n-m|+j < a_k,
\]
so that
\[
\varpi_{|n-m|+j}\geq 2^{p}>M_p.
\]
Hence condition (ii) of Corollary \ref{c-bwahcgen} holds for the sets $C_p$ and $p=q$.

We will now thin out each set $C_p$ appropriately. First we set
\[
X =\bigcup_{k\geq 0} [a_k,a_k+k].
\]
Let $n\in C_p\setminus X$. Then 
\[
a_{k-1}\leq n<a_k
\]
implies by definition of $X$ and property \eqref{eq-ak} that we even have
\[
a_{k-1}+k-1 < n+p<a_k,
\]
so that
\[
\varpi_{n+p}\geq 2^{k}.
\]
Hence condition (i) of Corollary \ref{c-bwahcgen} holds for the sets $C_p\setminus X$. 

Next, for $q\geq 2$ let
\[
Y_q=\bigcup_{l\geq 1}\bigcup_{k\geq 0}\big( [l.10^{q^2}+a_k-q, l.10^{q^2}+a_k+q)\cup (l.10^{q^2}-a_k-q, l.10^{q^2}-a_k+q]\big).
\]
If $n\in C_p\setminus \bigcup_{q>p}Y_q$ and $m=l.10^{q^2}\in C_q$ with $p<q$ then we have for any $k\geq 0$
\[
|n-m|\notin [a_k-q,a_k+q),
\]
hence, for any $j=0,\ldots,q$,
\[
\varpi_{|n-m| +j}\geq 2^{q}>M_q.
\]
This shows that the sets $C_p\setminus \bigcup_{q>p}Y_q$ satisfy condition (ii) of Corollary \ref{c-bwahcgen} for $p<q$.

Altogether we have that the sets
\[
A_p:=C_p\setminus\Big(X\cup \bigcup_{q>p}Y_q\Big),\quad p\geq 1,
\]
satisfy the conditions of Corollary \ref{c-bwahcgen}; note that these sets are pairwise disjoint because $C_q\subset Y_q$, $q\geq 2$. It remains to show that each $A_p$ has positive lower density. 

Obviously, $X$ has density 0. For $Y_q$ we note that for $k\geq q$ we have that
\[
a_k = 5.10^{k^2} +\ldots + 5.10^{q^2} + 5.10^{(q-1)^2} +\ldots + 5.10^{1} = m.10^{q^2} + a_{q-1}
\]
for some $m\geq 1$, hence
\[
Y_q\cap\mathbb{N}_0\subset \bigcup_{l\geq 1}\bigcup_{k=0}^{q-1}\big( [l.10^{q^2}+a_k-q, l.10^{q^2}+a_k+q)\cup (l.10^{q^2}-a_k-q, l.10^{q^2}-a_k+q]\big).
\]
Now, let $N\geq 0$. Then an interval $[l.10^{q^2}+a_k-q,l.10^{q^2}+a_k+q)$ meets $[0,N]$ only if $l.10^{q^2}+a_k-q\leq N$, which implies that $l\leq \frac{N+q}{10^{q^2}}$. On the other hand, an interval $(l.10^{q^2}-a_k-q,l.10^{q^2}-a_k+q]$ meets $[0,N]$ only if $l.10^{q^2}-a_k-q+1\leq N$; since we need only consider $k\leq q-1$, this implies that $l\leq \frac{N+a_{q-1}+q}{10^{q^2}}$. Hence the number of elements from $\bigcup_{q>p}Y_q\cap\mathbb{N}_0$ in $[0,N]$ is at most
\[
\sum_{q=p+1}^\infty 2q\frac{2N+2q+a_{q-1}}{10^{q^2}};
\]
this series converges since $a_{q-1}\leq 60.10^{q^2-2q}$. Therefore the upper density of $\bigcup_{q>p}Y_q\cap \mathbb{N}_0$ does not exceed
\[
\sum_{q=p+1}^\infty \frac{4q}{10^{q^2}}\leq \sum_{\nu=1}^\infty \frac{1}{10^{p^2+\nu}} = \frac{1}{9}10^{-p^2}.
\]

Altogether we have that the set $A_p$ has lower density at least $\frac{8}{9}10^{-p^2}$, which completes the proof.
\end{proof}

We note that Bayart and Grivaux have obtained further properties of their weighted backward shift $B_w$ on $c_0$: It is frequently hypercyclic but does not have any unimodular eigenvalues, it is not mixing, and it does not admit any non-trivial invariant Gaussian measure (that is, apart from the Dirac measure at 0), see \cite[Section 5.7]{BaGr07}. However, this is true for any frequently hypercyclic, non-chaotic weighted backward shift $B_w$ on $c_0$. Indeed, the unimodular eigenvectors of $B_w$ are the non-zero multiples of $(\frac{\lambda^n}{w_1\cdots w_n})_{n\geq 0}$, $|\lambda|=1$, which only belong to $c_0$ if $B_w$ is chaotic; for the assertions on mixing and the Gaussian measures use \cite[Example 4.9(a)]{GrPe11} and the proof of \cite[Corollary 5.5]{BaGr07}, respectively. In particular, the weighted shift constructed above has these additional properties. 

We will finally show that a variant of this operator is even very frequently hypercyclic.

\begin{theorem} 
There exists a weighted backward shift $B_w$ on $c_0$ that is very frequently hypercyclic but not chaotic. In particular, it does not have any unimodular eigenvalues, it is not mixing, and it does not admit any non-trivial invariant Gaussian measure.
\end{theorem}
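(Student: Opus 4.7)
The plan is to apply Corollary \ref{c-bwahcgen} to an appropriate variant of the weighted shift from Theorem \ref{t-BaGr}, taking the Furstenberg family $\mathcal{A}_{\text{Hin}}$ of Example \ref{ex-fur1}(c) whose $\mathcal{A}$-hypercyclicity is very frequent hypercyclicity. Note that $\mathcal{A}_{\text{Hin}}$ is finitely invariant, since a finite change of $A$ does not affect the lower density of $\bigcup_{n\leq N}(A-n)$, so the corollary applies. By Hindman's observation recalled in Example \ref{ex-fur1}(c), any set $A\subset\mathbb{N}_0$ with $\underline{\text{dens}}\,A = \overline{\text{Bd}}\,A > 0$ lies in $\mathcal{A}_{\text{Hin}}$; thus it suffices to exhibit pairwise disjoint sets $A_p$ enjoying this density equality and satisfying conditions (i), (ii) of the corollary.

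I would retain the weights $\varpi_n = 2^{n - a_{k-1}}$ on each block $[a_{k-1}, a_k)$, with $a_k = 5\sum_{j=1}^k 10^{j^2}$, exactly as in Theorem \ref{t-BaGr}. Non-chaoticity is then automatic because $\varpi_{a_k}=1$ prevents $\varpi_n\to\infty$. The candidates for $A_p$ are subsets of the arithmetic progression $C_p = \{l\cdot 10^{p^2} : l\geq 1\}$, whose lower density and upper Banach density both equal $10^{-p^2}$. The task is to remove from $C_p$ a suitable exceptional set in order to satisfy (i) and (ii) of Corollary \ref{c-bwahcgen}, without breaking the equality $\underline{\text{dens}} = \overline{\text{Bd}}$.

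For (i) I must remove the points $n\in C_p$ with $n+p$ close to some $a_k$, i.e.\ the intersection $C_p\cap X$ for $X=\bigcup_k[a_k,a_k+k]$; for (ii) I must additionally remove $C_p\cap\bigcup_{q>p}Y_q$, with $Y_q$ defined as in the proof of Theorem \ref{t-BaGr}. The essential technical step is to check that these removals have upper Banach density zero \emph{relative to} $C_p$, so that $A_p$ inherits the density equality from $C_p$. For $X$, this follows because the intervals $[a_k,a_k+k]$ have length $k+1$ while being separated by gaps of size at least $5\cdot 10^{k^2}$; a direct count shows that in every window $[m,m+N]$ the contribution of $X$ to $C_p$ is negligible uniformly in $m$ as $N\to\infty$. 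For $\bigcup_{q>p}Y_q\cap C_p$ the same sparsity heuristic should apply, but here lies the main obstacle: $Y_q$ clusters near multiples of $10^{q^2}$ and can potentially produce local density spikes within $C_p$ that would destroy the equality $\underline{\text{dens}} = \overline{\text{Bd}}$. If the estimates used in Theorem \ref{t-BaGr} (which only ensure $\underline{\text{dens}}\,A_p\geq \tfrac{8}{9}\cdot 10^{-p^2}$) do not deliver the matching upper bound on $\overline{\text{Bd}}\,A_p$, one can adjust the construction by thinning $C_p$ slightly, for instance by restricting $l$ to values avoiding a controlled set of residues modulo higher powers of $10$, or by using a more rapidly growing spacing than $10^{p^2}$, so as to absorb the $Y_q$-contributions into density zero.

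Once the $A_p$ are shown to lie in $\mathcal{A}_{\text{Hin}}$ and satisfy conditions (i), (ii) of Corollary \ref{c-bwahcgen}, the criterion delivers that $B_w$ is $\mathcal{A}_{\text{Hin}}$-hypercyclic, i.e.\ very frequently hypercyclic, while remaining non-chaotic. The remaining three assertions---no unimodular eigenvalues, not mixing, and no non-trivial invariant Gaussian measure---follow automatically from the remark after Theorem \ref{t-BaGr}, since they were shown there to hold for \emph{any} frequently hypercyclic non-chaotic weighted backward shift on $c_0$, in particular for the present $B_w$.
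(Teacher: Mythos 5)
Your overall strategy matches the paper's in outline: apply Corollary \ref{c-bwahcgen} with $\mathcal{A}=\mathcal{A}_{\text{Hin}}$ to sets $A_p=C_p\setminus(X\cup\bigcup_{q>p}Y_q)$, get non-chaoticity from $\varpi_{a_k}=1$, and dispose of the last three assertions via the remark following Theorem \ref{t-BaGr}. The genuine divergence is in how $A_p\in\mathcal{A}_{\text{Hin}}$ is to be established. You want to invoke Hindman's sufficient condition $\underline{\text{dens}}\,A_p=\overline{\text{Bd}}\,A_p>0$ while keeping the weights of Theorem \ref{t-BaGr} unchanged; the paper instead abandons that sufficient condition entirely, modifies the construction (blocks governed by a rapidly growing sequence $(m_k)$, and sets $Y_q$ whose interval end-points are aligned across all $q$ by a careful choice of shifts $d_q$), and verifies the definition of $\mathcal{A}_{\text{Hin}}$ directly by showing that $\bigcup_{n=0}^{N_r}(B_p-n)$ covers $\mathbb{N}_0\setminus\bigcup_{q>r}Y_q$, whose upper density tends to $0$ as $r\to\infty$.

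The gap is exactly the step you yourself flag as ``the main obstacle'' and then leave open: the equality $\underline{\text{dens}}\,A_p=\overline{\text{Bd}}\,A_p$ is never proved, and neither of your fallback remedies clearly produces it. Faster spacing alone changes nothing: for any spacing, $C_{p+1}\subset Y_{p+1}$ is removed from $C_p$, so $\underline{\text{dens}}\,A_p\le \text{dens}\,C_p-\text{dens}\,C_{p+1}<\text{dens}\,C_p$, and the required equality can only hold if $\overline{\text{Bd}}\,A_p$ drops by exactly the same amount --- something no crude sparsity estimate (such as the bound $\underline{\text{dens}}\,A_p\ge\tfrac{8}{9}10^{-p^2}$ from Theorem \ref{t-BaGr}) can deliver. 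The missing idea that would rescue your route is the arithmetic structure of the removed set: apart from a finite set, each $Y_q$ is periodic modulo $10^{q^2}$ (only $k\le q-1$ matter, as in the paper's proof), so $C_p\cap\bigcup_{q>p}Y_q$ is an increasing union of periodic sets with summable densities; writing $A_p$ as a periodic set minus a tail of upper Banach density at most $\sum_{q>Q}O(q^2)10^{-q^2}\to 0$ (plus the density-zero set $X$, which affects neither quantity) gives $\overline{\text{Bd}}\,A_p-\underline{\text{dens}}\,A_p\le\varepsilon_Q\to 0$, hence equality, with positivity coming from the paper's own estimates. With that lemma supplied, your argument closes and is arguably cleaner than the paper's, since it requires no realignment of the $Y_q$ and no new weight sequence; without it, the proof is incomplete at its decisive point.
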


\begin{proof} We will modify the construction in the previous proof. We set $a_0=0$ and
\[
a_k = 5.10^{m_k}+5.10^{m_{k-1}}+\ldots+5.10^{m_1},\quad k\geq 1,
\]
where $(m_k)_{k\geq 1}$ is a strictly increasing sequence of positive integers such that $m_1=1$ and, for any $q\geq 2$,
\begin{equation}\label{eq-mk}
2(a_{q-1}+q)+3.10^{m_{q-1}}<10^{m_q}.
\end{equation}
As before, we consider the weighted backward shift $B_w$ on $c_0$ given by
\[
\varpi_n = 2^{\nu}
\]
whenever $n\in [a_{k-1},a_k)$, $k\geq 1$, with $\nu=n-a_{k-1}$. Then $B_w$ is not chaotic, and we need to show that it is very frequently hypercyclic for a suitable choice of $(m_k)$.

The starting point is provided again by the sets
\[
C_p = \{ l.10^{m_p} : l\geq 1\},\quad p\geq 1, 
\]
which we are going to thin out. This is done inductively. 

Let $q\geq 2$. For any choice of $d_q\in[0,10^{m_{q-1}})$ the set 
\[
Y_q:=\bigcup_{l\geq 1}\ [l.10^{m_q}-d_q-a_{q-1}-q, l.10^{m_q}+d_q+a_{q-1}+q]
\]
consists of pairwise disjoint intervals, between any two of which there is at least one element of $C_{q-1}$; this follows from \eqref{eq-mk}. We will use the freedom of the $d_q$ for the following. We first set $d_2=0$; now if $q\geq 3$ then the left-end points of the intervals in $Y_{q-1}$ are $10^{m_{q-1}}$ apart; thus, by choosing $d_q$ appropriately we can ensure that each left-end point of an interval in $Y_q$ coincides with the left-end point of an interval in $Y_{q-1}$. By symmetry the same is then true for right-end points. 

This finishes our construction. We now consider, as in the previous proof, the sets 
\[
X =\bigcup_{k\geq 0} [a_k,a_k+k]
\]
and, for $p\geq 1$,
\[
A_p:=C_p\setminus\Big(X\cup \bigcup_{q>p}Y_q\Big).
\]
The latter are pairwise disjoint because $C_q\subset Y_q$ for $q\geq 2$. It follows exactly as before that condition (i) and the case $p=q$ in condition (ii) of Corollary \ref{c-bwahcgen} are satisfied with $M_p=2^{p-1}$, $p\geq 1$. The case $p<q$ in condition (ii) follows in the same way after noting that, for $k\geq q$,
\[
a_k = m.10^{m_q}+a_{q-1}
\]
for some $m\geq 1$ and hence, for any $q\geq 2$,
\begin{align*}
Y_q &\supset \bigcup_{l\geq 1}\bigcup_{k= 0}^{q-1}\big( [l.10^{m_q}+a_k-q, l.10^{m_q}+a_k+q)\cup (l.10^{m_q}-a_k-q, l.10^{m_q}-a_k+q]\big)\\
&\supset \bigcup_{l\geq 1}\bigcup_{k\geq 0}\big( [l.10^{m_q}+a_k-q, l.10^{m_q}+a_k+q)\cup (l.10^{m_q}-a_k-q, l.10^{m_q}-a_k+q]\big) \cap \mathbb{N}_0.
\end{align*}

It remains to show that each set $A_p$ belongs to the Furstenberg family $\mathcal{A}_{\text{Hin}}$, see Example \ref{ex-fur1}(c), at least for a suitable choice of $(m_k)$. 

Since $X$ has density 0 it is easy to see that it suffices to show that, for any $p\geq 1$,
\[
B_p:=C_p\setminus\bigcup_{q>p}Y_q\in \mathcal{A}_{\text{Hin}}.
\]
To this end, fix $p\geq 1$. It follows from our construction that $\bigcup_{q>p} Y_q$ consists of pairwise disjoint (maximally chosen) intervals of varying sizes, each end-point of which is also an end-point of one of the intervals in $Y_{p+1}$. Hence between any two of these maximal intervals there is an element of $C_p$, so that to the right of any interval in $\bigcup_{q>p} Y_q$ there is an element from $C_p$ at distance at most $10^{m_p}$. As a consequence, taking the unions of sufficiently many sets $B_p-n$ will gradually fill up the holes caused by the $Y_p$ (and those between the elements of $C_p$). More precisely, fix $r\geq p+1$ and write
\[
B_p= C_p\setminus\bigcup_{q=p+1}^r Y_q\setminus\bigcup_{q=r+1}^\infty Y_q.
\]
Since the maximal length of an interval in $\bigcup_{q=p+1}^r Y_q$ is at most
\[
L_r=2(10^{m_{r-1}}+a_{r-1}+r)+1,
\]
we find with $N_r:=10^{m_p}+L_r$ that
\[
\bigcup_{n=0}^{N_r} (B_p-n) \supset \mathbb{N}_0\setminus \bigcup_{q=r+1}^\infty Y_q.
\]
In order to estimate the lower density of this set, let $M\geq 0$. Then an interval $[l.10^{m_q}-d_q-a_{q-1}-q, l.10^{m_q}+d_q+a_{q-1}+q]$ meets $[0,M]$ only if
\[
l\leq \frac{M+d_q+a_{q-1}+q}{10^{m_q}}\leq \frac{M+8.10^{m_{q-1}}}{10^{m_q}},
\]
and each such interval contains $2(d_q+a_{q-1}+q)+1\leq 17.10^{m_{q-1}}$ elements. Hence the number of elements of $\bigcup_{q=r+1}^\infty Y_q$ in $[0,M]$ is at most
\[
\sum_{q=r+1}^\infty 17.10^{m_{q-1}} \frac{M+8.10^{m_{q-1}}}{10^{m_q}}.
\]
If, finally, $(m_k)$ increases sufficiently fast, then this series converges, and the upper density of $\bigcup_{q=r+1}^\infty Y_q$ does not exceed
\[
17\sum_{q=r+1}^\infty \frac{10^{m_{q-1}}}{10^{m_q}},
\]
which tends to 0 as $r\to\infty$. For this choice of $(m_k)$ we then have that, for any $p\geq 1$,
\[
\lim_{r\to\infty} \underline{\text{dens}}\; \bigcup_{n=0}^{N_r} (B_p-n)= 1,
\]
as had to be shown.
\end{proof}

In fact, examples of very frequently hypercyclic, non-chaotic operators can already be found in the literature. In \cite{BaMa09}, Bayart and Matheron construct Kalisch-type operators on $C_0[0,2\pi]$ (Section 5.5.4) and on a separable Hilbert space (Theorem 6.41) that admit ergodic measures of full support and hence are very frequently hypercyclic (see Example \ref{ex-fur1}(c) above), but neither is chaotic. We note that the first operator has no unimodular eigenvalues while the second does. 

These examples shed some light on Question \ref{q-chaos} above. They show that not even for the Furstenberg family $\mathcal{A}=\mathcal{A}_{\text{Hin}}$ one has that $\mathcal{A}$-hypercyclicity implies chaos. One might therefore conjecture that the answer to Question \ref{q-chaos} is negative.

\section*{Acknowledgements} The authors are grateful to Étienne Matheron, Quentin Menet and Vassili Nestoridis for helpful discussions.

~\\[2mm]
\noindent
\parbox[t][3cm][l]{7cm}{\small
\noindent
Antonio Bonilla\\
Departamento de An\'alisis Matem\'atico\\
Universidad de La Laguna\\
C/Astrof\'{\i}sico Francisco S\'anchez, s/n\\
38721 La Laguna, Tenerife, Spain\\
E-mail: a.bonilla@ull.es}
\parbox[t][3cm][l]{8cm}{\small
Karl-G. Grosse-Erdmann\\
Département de Mathématique, Institut Complexys\\
Universit\'e de Mons\\
20 Place du Parc\\
7000 Mons, Belgium\\
E-mail: kg.grosse-erdmann@umons.ac.be}

\end{document}